\def\R{\mathbb{R}}
\renewcommand{\P}{\mathbb{P}}
\newcommand{\E}{\mathbb{E}}
\newcommand{\Var}{\mathbb{V}\!\mathrm{ar}}
\newcommand{\Tr}{\mathrm{Tr}}
\newcommand{\Diag}{\mathrm{Diag}}
\renewcommand{\and}{\quad \text{ and } \quad}
\newcommand{\1}{\mathbbm{1}}
\theoremstyle{plain}
\newtheorem{theorem}{Theorem}
\newtheorem{proposition}{Proposition}
\begin{document}
%%-----------------------------
%%      the top matter
%%-----------------------------
\title{On Sparsity and Sub-Gaussianity in the Johnson-Lindenstrauss Lemma}\thanks{Chaire SeqALO (ANR-20-CHIA-0020-01)}\thanks{PEPR IA project FOUNDRY (ANR-23-PEIA-0003)}% At most 5 thanks
\author{Aurélien Garivier and Emmanuel Pilliat}\address{Univ. Lyon
	ENS de Lyon
	UMPA UMR 5669  /  LIP UMR 5668
	46 allée d'Italie
	F-69364 Lyon cedex 07
}
\date{August 30th 2024}
\begin{abstract} 
We provide a simple proof of the Johnson-Lindenstrauss lemma for sub-Gaussian variables. We extend the analysis to identify how sparse projections can be, and what the cost of sparsity is on the target dimension.
The Johnson-Lindenstrauss lemma is the theoretical core of the dimensionality reduction methods based on random projections. While its original formulation involves matrices with Gaussian entries, the computational cost of random projections can be drastically reduced by the use of simpler variables, especially if they vanish with a high probability. In this paper, we propose a simple and elementary analysis of random projections under classical assumptions that emphasizes the key role of sub-Gaussianity. Furthermore, we show how to extend it to sparse projections, emphasizing the limits induced by the sparsity of the data itself.
\end{abstract}
\begin{resume}
Nous présentons ici une preuve simple du lemme de Johnson-Lindenstrauss pour les variables sous-Gaussiennes, qui permet d'identifer à quel point les matrices de projections peuvent être creuses et avec quelles conséquences pour la dimension cible. 
Le lemme de Johnson-Lindenstrauss est au c\oe{}ur des méthodes de réduction de dimension par projections aléatoires. Son énoncé initial impliquait des matrices de variables Gaussiennes, mais il a ensuite été montré que des variables plus simples, pouvant être nulles avec une probabilité importante, présentaient les mêmes garanties théoriques tout en réduisant drastiquement le coût de calcul. 
Nous proposons dans cet article une analyse simple et élémentaire des projections aléatoires qui met en lumière le rôle clé de la sous-Gaussianité. En outre, nous montrons comment étendre cette analyse aux matrices creuses, en mettant au jour les limites induites par des données elles-même parcimonieuses. 
\end{resume}
\subjclass{62, 60}
\keywords{Johnson-Lidenstrauss, Random Projections, Sparsity}
\maketitle
%%-----------------------------
%%      your text
%%-----------------------------
\section{Introduction}
The celebrated Johnson-Lindenstrauss lemma~\cite{JL84}  ensures the existence low-distortion embeddings of points from high-dimensional into low-dimensional Euclidean space. If $x_1,\dots,x_n\in\R^p$, where $p$ is a (large) integer, and if $\epsilon>0$ is a tolerance parameter, then there exists a matrix $A$ in the set $\mathcal{M}_{d,p}(\R)$ of real matrices with $d$ rows and $p$ columns such that 
\begin{equation}\label{eq:JLprop} \forall 1\leq i,j\leq n, \quad (1-\epsilon) \|Ax_i-Ax_j\|^2\leq \|x_i-x_j\|^2 \leq (1+\epsilon) \|Ax_i-Ax_j\|^2 \end{equation}
as soon as 
\begin{equation}\label{eq:JLcond}
	d\geq \frac{8\log(n)}{\epsilon^2-\epsilon^3}\;.\end{equation}
The classical proof of this result is an elegant illustration of the Probabilistic Method~\cite{alon04}: when drawing the entries of $A$ at random from independent Gaussian distributions, Property~\eqref{eq:JLprop} is satisfied with positive probability when the output space is large enough. It results from a simple deviation bound for the chi-square distribution, and hence builds on the specificity of the Gaussian distribution. This proof is not only mathematically remarkable, but it also gives mathematical foundations for \emph{random projections}, a simple and computationally efficient dimensionality reduction technique in unsupervised machine learning (see e.g.~\cite{randomProj01,indyk01,vempala04,sarlos06,clarkson09} and references therein). 

In 2001, \cite{Achlioptas01} showed that random projections can easily be extended to non-Gaussian matrices. In particular, Rademacher, or $\{-1, 0, 1\}$-valued entries  can just as well be chosen, leading to even simpler algorithms suitable for database applications. The proof provided in this article relies on moment bounds and is somewhat specific to those two families of distributions. It is generally considered~\cite{verySparse06} that "a uniform distribution is easier to generate than normals, but the analysis is more difficult". Even faster methods for sparse data or streams where then devised \cite{CHARIKAR20043,sparserJL14} using random hashing constructions and more involved moment bounds. Very recently and concurrently to our work, \cite{simpleJL24} has proposed a unified analysis of sparse Johnson-Lindenstrauss methods based on the Hanson-Wright inequality, while \cite{pmlr-v235-hogsgaard24a} tries to identify the optimal rate of sparsity in the data as a function of the dimension $d$, the number of points $n$ and the tolerance parameter $\epsilon$.

 The main contribution of this paper is twofold. 
 The first purpose is to highlight that \emph{sub-Gaussianity} is indeed an elementary property of random matrix entries that suffices to ensure the success of random projections. Contrary to \cite{simpleJL24}, our analysis is entirely elementary, and exploits sub-Gaussianity in an original way. A connection to the Hanson-Wright inequality is  proposed at the end of the paper. To begin, we give here a simple proof that any $1$-sub-Gaussian law with variance $1$ offers the same guarantees as the Gaussian law. Our analysis explains simply why $\{-1, 0, 1\}$-valued variables with a proportion up to $2/3$ of coefficients equal to $0$ are a safe choice, but also makes it possible to design many variants, and to go further in the understanding of much sparser random projections. 
Interestingly, our treatments of the lower and the upper bound of \eqref{eq:JLprop} are not totally symmetric. While the upper deviations of sub-Gaussian variables can be handled by Chernoff's bound just as those of the Gaussian law, the lower deviations can obviously be much smaller (after all, constant variables are sub-Gaussian) and hence require a different argument.
The second purpose of this paper is to build on this analysis to clearly emphasize the conditions on the data under which much sparser projection matrices can be considered. The take-home message is that the distances are preserved if and only if the proportion of non-zero entries in the projection matrix $A$ multiplied by the number significant coefficients in each vector $x_i$ is sufficiently large.

The paper is organized as follows. Section~\ref{sec:data-agnostic} provides a new analysis of random projections without assumptions on the data. Section~\ref{sec:chernoff} proposes a deviation bound for the averages of squared sub-Gaussian variables. The obtained bound are applied in Section~\ref{sec:JL} to derive the classical Johnson-Lindenstrauss lemma for sub-Gaussian random matrices. We discuss in Section~\ref{sec:RP} a few examples of choices of the distribution $P$ for random projections. 
Section~\ref{sec:sparseRP} investigates the possibility of much sparser projection matrices and of the theoretical limit to the minimal sparsity. Theorem \ref{thm:JLs1s}, with its rather simple proof in Section~\ref{sec:proofs}, extends the previous analysis with minimal changes to sparse matrices. Theorem \ref{th:jl_sparse} gives the order of magnitude of the minimal allowed sparsity to obtain a quasi-isometry with high probability, at the price of poly-logarithmic terms. The optimality of this result is discussed in Section~\ref{subsec:optimality}. A connection to the Hanson-Right inequality is proposed in Section~\ref{sec:HW}, before the proofs of the main theorems in Section~\ref{sec:proofs}.

\section{Data-agnostic random projections}\label{sec:data-agnostic}
We recall in this section known but fundamental results that are of constant use in the sequel. The originality lies in the fact that the Johnson-Lindenstrauss lemma is stated from the start for sub-Gaussian variables. Furthermore, we were not able to find anywhere else the elegant derivation of Equation~\eqref{eq:gaussianTrick} written like this. Section~\ref{sec:RP} contains a simpler derivation of results published in~\cite{Achlioptas01}, with a discussion on their optimality.
\subsection{Chernoff's method for squared sub-Gaussian variables}\label{sec:chernoff}
Let $X$ be a random variable  assumed to be $1$-sub-Gaussian, which means that $\forall \lambda\in\R, \E\big[e^{\lambda X}\big]\leq e^{\lambda^2/2}$. 
This implies in particular that $\E[X] = 0$ and that $\Var[X]\leq 1$. We derive in this section a deviation bound for the empirical mean of independent copies of $X^2$:
\begin{proposition}\label{prop:devX2}
If $X_1,\dots,X_n$ are iid $1$-sub-Gaussian random variables with variance $1$, then 
\[\P\left(1-\epsilon \leq \frac{X_1^2+\dots+X_d^2}{d}\leq 1+\epsilon\right)\leq 2 e^{-d\left(\frac{\epsilon^2 - \epsilon^3}{4} \right)} 
\;.\]
\end{proposition}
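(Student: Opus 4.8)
First I would observe that, as displayed, the probability concerns the event $\{1-\epsilon \le \frac{X_1^2+\dots+X_d^2}{d}\le 1+\epsilon\}$; since the assumptions give $\E[X^2]=1$, this event has probability close to $1$, so the exponentially small right-hand side can only control its \emph{complement}. I therefore read the statement as the two-sided deviation bound $\P\big(\left|\frac{X_1^2+\dots+X_d^2}{d}-1\right|>\epsilon\big)\le 2e^{-d(\epsilon^2-\epsilon^3)/4}$ (equivalently, the displayed probability is at least $1-2e^{-d(\epsilon^2-\epsilon^3)/4}$), and prove that. The plan is to treat the upper and lower deviations separately and to combine them by a union bound, the factor $2$ being exactly the price of the two tails.

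For the upper tail I would use Chernoff's method. For $\lambda\in(0,\tfrac12)$, independence gives $\P\big(\sum_i X_i^2 \ge d(1+\epsilon)\big) \le e^{-\lambda d(1+\epsilon)}\,\E[e^{\lambda X^2}]^d$. The heart of the argument is the bound $\E[e^{\lambda X^2}] \le (1-2\lambda)^{-1/2}$, which I would obtain through the Gaussian trick of Equation~\eqref{eq:gaussianTrick}: introducing a standard Gaussian $g$ independent of $X$ and using $\E_g[e^{\sqrt{2\lambda}\,g x}]=e^{\lambda x^2}$, Fubini and the $1$-sub-Gaussianity of $X$ give $\E[e^{\lambda X^2}]=\E_g\E_X[e^{\sqrt{2\lambda}\,g X}]\le \E_g[e^{\lambda g^2}]=(1-2\lambda)^{-1/2}$. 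In other words the moment generating function of $X^2$ is dominated by that of a $\chi^2_1$ variable, so the problem reduces to the classical chi-square tail. Taking $\lambda=\frac{\epsilon}{2(1+\epsilon)}$ yields $\exp\big(-\frac d2(\epsilon-\log(1+\epsilon))\big)$, and the elementary inequality $\epsilon-\log(1+\epsilon)\ge \frac12(\epsilon^2-\epsilon^3)$ on $(0,1)$ closes the upper tail at $e^{-d(\epsilon^2-\epsilon^3)/4}$.

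The lower tail is where the treatment becomes genuinely asymmetric, as the authors announce, and it is the step I expect to be the main obstacle. The Gaussian trick needs $\lambda>0$ and gives no control of $\E[e^{-\mu X^2}]$, and sub-Gaussianity alone says nothing about the mass $X^2$ places near $0$, so the variance-one normalization must be used here. For $\mu>0$ I would write $\P\big(\sum_i X_i^2 \le d(1-\epsilon)\big) \le e^{\mu d(1-\epsilon)}\,\E[e^{-\mu X^2}]^d$ and expand the Laplace transform. The naive bound $e^{-u}\le 1-u+\frac{u^2}{2}$ together with the sub-Gaussian fourth moment $\E[X^4]\le 3$ gives only $\E[e^{-\mu X^2}]\le 1-\mu+\frac32\mu^2$, and optimizing in $\mu$ yields $e^{-d\epsilon^2/6}$, which reaches the target constant $\frac14(\epsilon^2-\epsilon^3)$ only for $\epsilon$ bounded away from $0$. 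To obtain $\frac14$ uniformly in $\epsilon\in(0,1)$ one must pick up the sharper second-order quantity $\Var(X^2)=\E[X^4]-1\le 2$ rather than $\E[X^4]$: bounding the cumulant generating function $\psi(\mu)=\log\E[e^{-\mu X^2}]$, which satisfies $\psi(0)=0$, $\psi'(0)=-1$ and $\psi''(0)=\Var(X^2)\le 2$, by $-\mu+\mu^2$ on $\mu\ge 0$ gives $\min_{\mu>0}(-\mu\epsilon+\mu^2)=-\epsilon^2/4$ and hence $e^{-d\epsilon^2/4}\le e^{-d(\epsilon^2-\epsilon^3)/4}$. Establishing $\psi(\mu)\le -\mu+\mu^2$ for all $\mu\ge 0$ — for instance by showing that the exponential tilt $e^{-\mu X^2}$ concentrates $X^2$ near $0$ and keeps $\psi''(\mu)\le 2$ — is the delicate point I expect to require care. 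A final union bound over the two tails then produces the factor $2$ and completes the proof.
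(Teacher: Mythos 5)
Your reading of the statement (the displayed inequality should apply to the complement of the event, i.e.\ it is a two-sided deviation bound) is correct, and your upper-tail argument is exactly the paper's: the Gaussian trick of Equation~\eqref{eq:gaussianTrick}, the optimal $\lambda^*=\epsilon/(2(1+\epsilon))$, and $\epsilon-\log(1+\epsilon)\geq(\epsilon^2-\epsilon^3)/2$. The genuine gap is in your lower tail. Your route rests on the claim $\psi(\mu)=\log\E[e^{-\mu X^2}]\leq-\mu+\mu^2$ for all $\mu\geq0$, which you do not prove and yourself flag as the delicate point. The information you have actually extracted from sub-Gaussianity (namely $\E[X^4]\leq3$, hence $\E[e^{-\mu X^2}]\leq1-\mu+\tfrac32\mu^2$) does \emph{not} imply it: at $\mu=\tfrac12$ one has $1-\mu+\tfrac32\mu^2=0.875$ while $e^{-\mu+\mu^2}=e^{-1/4}\approx0.779$, so the polynomial bound sits strictly above your target on $(0,1)$, precisely where the bound is needed (for $\mu\geq1$ it is trivial since $\psi\leq0$). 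Your suggested justification via a uniform tilted-variance bound $\psi''(\mu)\leq2$ for all $\mu\geq0$ is likewise unestablished — $\psi''(\mu)$ is the variance of $X^2$ under the tilt $\propto e^{-\mu X^2}$, and tilted variances are not monotone in general — so as written the proof is incomplete.

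Moreover, the detour is unnecessary, because your dismissal of the ``naive'' bound rests on a miscalculation: you bounded $\ln\big(1-\mu+\tfrac32\mu^2\big)$ by $-\mu+\tfrac32\mu^2$ (i.e.\ used only $\ln(1+x)\leq x$), which indeed yields $e^{-d\epsilon^2/6}$, but keeping the second-order term recovers the constant. This is exactly what the paper does in Section~\ref{subsec:LB}: applying $-\ln(1-u)\geq u+u^2/2$ with $u=\mu-\tfrac32\mu^2$, the Chernoff exponent satisfies
\[
\mu(\epsilon-1)-\ln\Big(1-\mu+\tfrac{3\mu^2}{2}\Big)\;\geq\;\mu\epsilon-\tfrac{3\mu^2}{2}+\tfrac{1}{2}\Big(\mu-\tfrac{3\mu^2}{2}\Big)^{2}\;\geq\;\mu\epsilon-\mu^2-\tfrac{3\mu^3}{2}\,,
\]
and the (non-optimized) choice $\mu=\epsilon/2$ gives $\tfrac{\epsilon^2}{4}-\tfrac{3\epsilon^3}{16}\geq\tfrac{\epsilon^2-\epsilon^3}{4}$, matching the upper tail so that the union bound yields the factor $2$. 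In short: replace your unproven cumulant-generating-function bound by this two-line computation from $\E[X^4]\leq3$, and your proof becomes complete — and coincides with the paper's.
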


For Gaussian variables, this is a well-known application of Chernoff's method that is to be found in many probability textbooks. Inspired in particular by Theorem 2.6 of~\cite{wainwright_2019}, we propose an extension to sub-Gaussian variable with an argument that is (as far as we know) original. The proof requires to treat the upper- and the lower bound separately, which is done is the two following subsections.

\subsubsection{Proof of the upper bound}\label{subsec:UB}
Chernoff's method requires to bound the exponential moments $\E\left[e^{\ell X^2}\right]$ of $X^2$ with $\ell>0$ for the right deviations and with $\ell<0$ for the left deviations. 
We start with the right deviations, for which we will see right away that a reduction to the Gaussian case is possible without further assumption. Following \cite{wainwright_2019} (Theorem 2.6), and remarking that for all $x\in\R$, and $\ell>0$,  \[e^{\ell x^2} = \int_{-\infty}^{\infty} e^{\lambda x}  \;\frac{e^{-\frac{\lambda^2}{4\ell}}}{2\sqrt{\pi\ell}} \;d\lambda\;,\]
if $X$ is $1$-sub-Gaussian we obtain by Fubini's theorem that for  every $\ell\in(0,1/2)$
\begin{align*}
	\E\left[e^{\ell X^2}\right]&=\E\left[ \int_{-\infty}^{\infty}  e^{\lambda X}  \;\frac{e^{-\frac{\lambda^2}{4\ell}}}{2\sqrt{\pi\ell}} \;d\lambda\right]	=\int_{-\infty}^{\infty}\E\left[  e^{\lambda X} \right]\;\frac{e^{-\frac{\lambda^2}{4\ell}}}{2\sqrt{\pi\ell}} \;d\lambda \\
& \leq  \int_{-\infty}^{\infty} e^{\frac{\lambda^2}{2}} \;\frac{e^{-\frac{\lambda^2}{4\ell}}}{2\sqrt{\pi\ell}} \;d\lambda
= \int_{-\infty}^{\infty}e^{-\frac{\lambda^2(1-2\ell)}{4\ell}} \frac{d\lambda}{2\sqrt{\pi\ell}} = \frac{1}{\sqrt{1-2\ell}}\;,
\end{align*}
which holds with equality if and only if $X\sim\mathcal{N}(0,1)$.
% as can be seen above, or directly since in that case
%\[\E\left[e^{\ell X^2}\right]=  \frac{1}{\sqrt{2\pi}} \int_\R e^{\ell u^2} e^{-\frac{u^2}{2}}du = \frac{1}{\sqrt{2\pi}} \int_\R e^{-\frac{(1-2\ell)u^2}{2}} du = \frac{1}{\sqrt{1-2\ell}}\;. \]
Equivalently: observe that if $G\sim\mathcal{N}(0,1)$, Fubini's theorem implies that
\begin{align}
	\E_X\left[e^{\ell X^2}\right] & = \E_X\left[ \E_G\left[ e^{\sqrt{2\ell}X \,G}\right] \right] =  \E_G\left[ \E_X\left[ e^{\sqrt{2\ell}G \,X}\right] \right]  \nonumber\\
	&\leq \E_G\left[ e^{\ell G^2 }\right] = \frac{1}{\sqrt{2\pi}} \int_\R e^{\ell u^2} e^{-\frac{u^2}{2}}du %= \frac{1}{\sqrt{2\pi}} \int_\R e^{-\frac{(1-2\ell)u^2 \frac{1}{\sqrt{1-2}{2}}}} du 
 = \frac{1}{\sqrt{1-2\ell}} \label{eq:gaussianTrick}
\end{align}
with equality if and only if $X\sim\mathcal{N}(0,1)$.

Hence, all sub-Gaussian variables have exponential moments bounded by those of a Gaussian law, which permits the right-deviations to be handled the usual way. 
If $Z_1,\dots,Z_d$ are independent random variables with the same distribution as $X^2$, then  for every positive $\epsilon$, Markov's inequality implies that 
\[\P\left(\frac{Z_1+\dots+Z_d}{d}\geq 1+\epsilon\right) = \P\left(e^{\ell\big(Z_1+\dots+Z_d\big)}\geq e^{d\ell(1+\epsilon)}\right) \leq \frac{\E[e^{\ell Z_1}]^d}{e^{d\ell(1+\epsilon)}} =  e^{-d \left(\ell (1+\epsilon) - \ln \E\left[e^{\ell Z_1}\right]\right)}\;.\]
The concave function $\ell\mapsto \ell (1+\epsilon) -\ln \E\big[e^{\ell X}\big] = \ell (1+\epsilon) + \frac{1}{2}\log(1-2\ell)$ is maximized at $\ell^*$ such that $1+\epsilon=\frac{1}{1-2\ell^*}$, that is at $\ell^*=\frac{1}{2}\left(1-\frac{1}{1+\epsilon}\right)=\frac{\epsilon}{2(1+\epsilon)}$. Hence, 
$\P\big(Z_1+\dots+Z_d \geq (1+\epsilon)d\big) \leq e^{-d\,I(\epsilon)}$ with 
\[I(\epsilon) = \ell^*(1+\epsilon)-\ln \E\big[e^{\ell^* X}\big] = \frac{\epsilon-\log(1+\epsilon)}{2}\;. \] 
This expression can be slightly simplified in many different ways. Let us illustrate the very useful "Pollard trick": taking $g(\epsilon)=\epsilon-\log(1+\epsilon)$, since $g(0)=g'(0)=0$  and since $g''(\epsilon)=1/(1+\epsilon)^2$ is convex, by Jensen's inequality
\[\frac{\epsilon-\log(1+\epsilon)}{\epsilon^2/2} = \int_{0}^1 g''(s\epsilon) 2(1-s)ds \geq g''\left(\epsilon\int_{0}^1 s\; 2(1-s)ds\right) = g''\left(\frac{\epsilon}{3}\right)\;,\]
and hence $\displaystyle{I(\epsilon) =\frac{\epsilon-\log(1+\epsilon)}{2} \geq \frac{\epsilon^2}{4\left(1+\frac{\epsilon}{3}\right)^2}} \geq \frac{\epsilon^2 - \epsilon^3}{4} $.
In summary, 
\begin{equation}\label{eq:chernoffUB}\P\left(\frac{Z_1+\dots+Z_d}{d}\geq 1+\epsilon\right)\leq 
	e^{-d\left(\frac{\epsilon^2 - \epsilon^3}{4} \right)} 
	\;.\end{equation}

\subsubsection{Proof of the lower bound}\label{subsec:LB}
There is no hope to prove  that $\E\left[e^{-\ell X^2}\right] \leq \frac{1}{\sqrt{1+2\ell}}$  for any $\ell>0$ for all $1$-sub-Gaussian distributions, since it is for example not the case if $X=0$ almost surely.
In the context of the Johnson-Lindenstrauss lemma, it is very natural to assume that the entries of the random matrix have variance 1, so that at least $\E\big[\|Ax_i-Ax_j\|^2\big] = \|x_i-x_j\|^2 $.
Under this assumption, it is maybe possible to bound the  negative exponential moments bounded by those of the standard Gaussian. 
and to conclude (as in the Gaussian case) by remarking that $I(-\epsilon) \geq I(\epsilon)$, i.e. that the left-deviations of the Chi-square are lighter than the right deviations. But we do unfortunately not have a proof for that. 

Instead, we remark that if $\Var[X] = 1$, the sub-Gaussianity inequality $\E\big[e^{\lambda X}\big]\leq e^{\lambda^2/2}$  implies by Taylor expansion around $\lambda = 0$ that $\E[X^4] \leq 3$.
Using that $\displaystyle{e^{-u} \leq 1-u+\frac{u^2}{2}}$, we obtain that 
$\E\left[e^{- \ell X^2}\right] \leq 1 - \ell + \frac{3\ell^2}{2}$ and hence
\[\P\left(\frac{Z_1+\dots+Z_d}{d}\leq 1-\epsilon\right) \leq  e^{-d\left( \ell(-1+\epsilon) -\ln\left(1-\ell+\frac{3\ell^2}{2}\right)\right)} 
\;.\]
Since $-\ln(1-u)\geq u+u^2/2$, 
\[\ell(-1+\epsilon ) -\ln\left(1-\ell+\frac{3\ell^2}{2}\right) \geq \ell\epsilon - \frac{3\ell^2}{2}  + \frac{(\ell-3\ell^2/2)^2}{2}  \geq \ell\epsilon - \ell^2 - \frac{3\ell^3}{2}  = \frac{\epsilon^2}{4} -\frac{3\epsilon^3}{16}\]
for $\ell = \epsilon/2$.
It follows that 
\begin{equation}\label{eq:chernoffLB}\P\left(\frac{Z_1+\dots+Z_d}{d}\leq 1-\epsilon\right)\leq e^{-d\left(\frac{\epsilon^2}{4} -\frac{3\epsilon^3}{16}\right)} \leq 
e^{-d\left(\frac{\epsilon^2 - \epsilon^3}{4} \right)} 
\;.\end{equation}

\subsection{Application to the Johnson-Lindenstrauss lemma }\label{sec:JL}
Now that we have proved that squares of sub-Gaussian variables concentrate as well as squares of Gaussian variables, we recall for the sake of self-containment the argument that permits to obtain the Johnson-Lindenstrauss lemma with no assumption on the data: 
\begin{theorem}[Johnson-Lindenstrauss Lemma]\label{thm:classicalJL}
Let $x_1,\dots,x_n\in\R^p$ and $\epsilon>0$. For every $d\geq \frac{8\log(n)}{\epsilon^2-\epsilon^3}$ there exists a matrix $A\in\mathcal{M}_{d,p}(\R)$ such that 
\begin{equation}\label{eq:JLprop} \forall 1\leq i,j\leq n, \quad (1-\epsilon) \|Ax_i-Ax_j\|^2\leq \|x_i-x_j\|^2 \leq (1+\epsilon) \|Ax_i-Ax_j\|^2 \;.\end{equation}
\end{theorem}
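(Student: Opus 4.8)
The plan is to use the probabilistic method: draw the entries of $A$ at random and show that~\eqref{eq:JLprop} holds with positive probability. Concretely, I would set $A = \frac{1}{\sqrt{d}} R$, where the entries $R_{k\ell}$ are iid $1$-sub-Gaussian with variance $1$ (e.g.\ $\mathcal{N}(0,1)$, or any of the laws discussed in Section~\ref{sec:RP}). The whole argument then reduces to controlling, for a single fixed vector $v\in\R^p$, the ratio $\|Av\|^2/\|v\|^2$ and showing it is close to $1$; the pairs $v = x_i - x_j$ are dealt with afterwards by a union bound.

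The key preliminary fact is that sub-Gaussianity with unit variance is preserved under normalized linear combinations. For a fixed $v$, the $k$-th coordinate $\sqrt{d}\,(Av)_k = \sum_{\ell=1}^p R_{k\ell} v_\ell$ is, by independence of the $R_{k\ell}$, a sum whose moment generating function satisfies $\E\big[e^{\lambda \sqrt{d}\,(Av)_k}\big] = \prod_\ell \E\big[e^{\lambda v_\ell R_{k\ell}}\big] \le \prod_\ell e^{\lambda^2 v_\ell^2/2} = e^{\lambda^2 \|v\|^2/2}$, and whose variance is exactly $\|v\|^2$. Hence each $Z_k := \big(\sqrt{d}\,(Av)_k/\|v\|\big)^2$ is the square of a $1$-sub-Gaussian variable of variance $1$, and the $Z_k$ are independent because the rows of $R$ are independent. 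Since $\|Av\|^2/\|v\|^2 = \frac{1}{d}\sum_{k=1}^d Z_k$, Proposition~\ref{prop:devX2} applies verbatim and yields
\[\P\!\left(\frac{\|Av\|^2}{\|v\|^2}\notin [1-\epsilon,\,1+\epsilon]\right) \le 2\,e^{-d\left(\frac{\epsilon^2-\epsilon^3}{4}\right)}\;.\]
On the complementary event one has $(1-\epsilon)\|v\|^2 \le \|Av\|^2 \le (1+\epsilon)\|v\|^2$, i.e.\ $A$ preserves the squared norm of $v=x_i-x_j$ up to a factor $1\pm\epsilon$; rewriting this as the sandwich displayed in~\eqref{eq:JLprop} (using, on the lower side, $\frac{1}{1+\epsilon}\ge 1-\epsilon$) is routine bookkeeping.

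Finally I would union-bound over the $\binom{n}{2} < n^2/2$ pairs $(i,j)$: the probability that the distortion bound fails for at least one pair is at most $\binom{n}{2}\cdot 2\,e^{-d(\epsilon^2-\epsilon^3)/4} < n^2\, e^{-d(\epsilon^2-\epsilon^3)/4}$. The hypothesis $d \ge \frac{8\log n}{\epsilon^2-\epsilon^3}$ forces $d\,\frac{\epsilon^2-\epsilon^3}{4}\ge 2\log n$, so this probability is strictly below $1$, and a matrix $A$ satisfying~\eqref{eq:JLprop} therefore exists. Since the substantive concentration work is already packaged in Proposition~\ref{prop:devX2}, there is no real obstacle here: the only two points demanding care are the stability of the ``$1$-sub-Gaussian, unit variance'' property under the linear map $v\mapsto Av$ (which is precisely what pins down the $1/\sqrt{d}$ scaling), and the counting that makes the constant $8$ of~\eqref{eq:JLcond} appear.
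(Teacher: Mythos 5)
Your proposal is correct and is essentially the paper's own proof: draw $A$ with iid $1$-sub-Gaussian, unit-variance entries scaled by $1/\sqrt{d}$, check that each normalized coordinate $\sqrt{d}\,(Av)_k/\|v\|$ is again $1$-sub-Gaussian with unit variance, apply Proposition~\ref{prop:devX2} to $\|Av\|^2/\|v\|^2 = \frac{1}{d}\sum_k Z_k$, and union-bound over the $\binom{n}{2}$ pairs to get failure probability strictly below $1$ when $d\geq 8\log(n)/(\epsilon^2-\epsilon^3)$. One caveat, which you share with the paper (whose proof simply says ``hence giving the desired conclusion''): the concentration event $\|Av\|^2/\|v\|^2\in[1-\epsilon,1+\epsilon]$ is identified with the sandwich~\eqref{eq:JLprop}, and while your inequality $\tfrac{1}{1+\epsilon}\geq 1-\epsilon$ does settle the left-hand inequality of~\eqref{eq:JLprop}, the right-hand one would need $\tfrac{1}{1-\epsilon}\leq 1+\epsilon$ (false for $\epsilon>0$), so strictly speaking both your argument and the paper's prove the ratio form of the lemma, and the conversion to the form stated in~\eqref{eq:JLprop} costs a slight adjustment of $\epsilon$ or of the constant.
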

\begin{proof}
In the sequel, we assume that $A_{i,j} =  T_{i,j}/\sqrt{d}\;, 1\leq i\leq d,1\leq j\leq p$, where the $(T_{i,j})$ centered, standard independent variables of a $1$-sub-Gaussian distribution $P$: 
\[\E[T_{i,j}] = 0, \quad \Var[T_{i,j}] = 1, \quad \E\left[e^{\lambda T_{i,j}}\right] \leq e^{\frac{\lambda^2}{2}} \;.\]
For a vector $y\in\R^p$, define $Y = Ay$ and for all $i\in\{1,\dots,d\}$
\[Z_i = \frac{\sqrt{d} \;Y_i}{\|y\|}  = \sum_{j=1}^p \frac{y_j}{\|y\|}T_{i,j}\;.\] 
Then, as  for all $\lambda\in\R$
\[\E\left[e^{\lambda Z_i}\right] = \prod_{j=1}^p \E\left[e^{\lambda \frac{y_j}{\|y\|}T_{i,j}}\right] \leq \prod_{j=1}^p e^{\frac{y_j^2\lambda^2}{2\|y\|^2}}  = e^{\frac{\lambda^2}{2}}\;, \]
$Z_i$ is $1$-sub-Gaussian.
Since 
\[  \frac{\|Ay\|^2}{\|y\|^2}  = \frac{1}{d} \sum_{i=1}^d \left(\frac{\sqrt{d}Y_i}{\|y\|}\right)^2  = \frac{1}{d} \sum_{i=1}^d Z_i^2\;,\]
Equations~\eqref{eq:chernoffUB} and~\eqref{eq:chernoffLB} yield:
\begin{align*} 
\P\left( \frac{\|Ay\|^2}{\|y\|^2} \notin \big[1-\epsilon, 1+\epsilon\big]  \right)
= \P\left(  \frac{1}{d} \sum_{i=1}^d Z_i^2 > 1+\epsilon \right) + \P\left(  \frac{1}{d} \sum_{i=1}^d Z_i^2 < 1-\epsilon \right)
\leq 2\,
e^{-d\left(\frac{\epsilon^2 - \epsilon^3}{4} \right)}  \leq \frac{2}{n^2}
\end{align*}
as soon as $d\geq \frac{8\log(n)}{\epsilon^2-\epsilon^3}$. By the union bound, 
\[\P\left( \bigcup_{1\leq i<j\leq n} \left\{ \big\|A(x_i-x_j)\big\|^2 \notin \left[ (1-\epsilon)\|x_i-x_j\|^2  ,  (1+\epsilon)\|x_i-x_j\|^2  \right] \right\}  \right) \leq \frac{n(n-1)}{n^2}<1\;,\]
hence giving the desired conclusion.
\end{proof}

Observe that the constant $8$ in Condition~\eqref{eq:JLcond} is the best that can be obtained from this proof. The dependency in $1/\epsilon^2$ also appears to be necessary, but the second-order term $\epsilon^3$ is slightly improvable. In the Gaussian case, the proof above allows to use 
\[d=\frac{4\log(n)}{\epsilon-\log(1+\epsilon)} \leq \frac{8\log(n)}{\epsilon^2} \left(1+\frac{\epsilon}{3}\right)^2\;,\]
as we saw in Section~\ref{subsec:UB}. For sub-Gaussian variables, the simple expression \eqref{eq:JLcond} covers at the same time left- and right-deviations. 
Also not that choosing $d\geq \frac{4\log(n^2/\delta)}{\epsilon^2-\epsilon^3}$
permits Property \eqref{eq:JLprop} to hold with probability at least $1-\delta$.

\subsection{What distribution should we use in random projections?}\label{sec:RP}

We have seen that any $1$-sub-Gaussian distribution of variance $1$ presents just the same guarantees as the standard Gaussian for random projections. This is for example the case of $\displaystyle{P = \frac{\delta_{-1}+\delta_1}{2}}$, or of $ P = \mathcal{U}\Big(\big[-\sqrt{3},\sqrt{3}\big]\Big)$, which are very simple laws that are fast to sample from. Indeed, their exponential moment functions are $\cosh(\lambda)$ and $\sinh(\sqrt{3}\lambda)/(\sqrt{3}\lambda)$ respectively, which are upper-bounded by $e^{\lambda^2/2}$.
One may wonder, after Achlioptas in~\cite{Achlioptas01}, how \emph{sparse} a random projection matrix can be (sparse matrices require fewer computations). 
\begin{proposition}\label{prop:ArchiBest}
    If $X$ is a $1$-sub-Gaussian random variable of variance $1$, then $P(X=0)\leq 2/3$, with equality if and only $\P(X=-\sqrt{3}) = \P(X=\sqrt{3}) = \P(X\neq 0)/2$.
\end{proposition}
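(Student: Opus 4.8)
The plan is to turn the two available moment constraints into a bound on $\P(X=0)$ by a single application of the Cauchy--Schwarz inequality, and then to read off the equality cases. Since $X$ is $1$-sub-Gaussian we have $\E[X]=0$ and $\E[X^2]=\Var[X]=1$; moreover, the Taylor expansion of $\lambda\mapsto\E[e^{\lambda X}]$ around $0$ already used in Section~\ref{subsec:LB} gives $\E[X^4]\leq 3$. These are the only two facts I would use.

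First I would write $X^2 = X^2\,\1\{X\neq 0\}$ almost surely and apply Cauchy--Schwarz to the pair $(X^2,\1\{X\neq 0\})$:
\[ 1 = \E[X^2] = \E\big[X^2\,\1\{X\neq 0\}\big] \leq \sqrt{\E[X^4]}\,\sqrt{\P(X\neq 0)} \leq \sqrt{3\,\P(X\neq 0)}\,. \]
Squaring gives $\P(X\neq 0)\geq 1/3$, that is $\P(X=0)\leq 2/3$, which is the announced bound.

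Next, for the equality case I would trace back both inequalities. Equality forces simultaneously $\E[X^4]=3$ and equality in Cauchy--Schwarz, the latter meaning that $X^2$ is proportional to $\1\{X\neq 0\}$, i.e. $|X|$ equals a constant $c$ on $\{X\neq 0\}$. Together with $\P(X\neq 0)=1/3$ and $\E[X^2]=1$ this forces $c^2 = 1/\P(X\neq 0) = 3$, so $X\in\{-\sqrt{3},0,\sqrt{3}\}$; the constraint $\E[X]=0$ then makes the two non-zero atoms equiprobable, which is exactly the stated condition. The converse implication is immediate from $\E[X^2]=1$ once $X$ is known to have this three-point form.

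The only genuinely computational point --- and the step I expect to be the main obstacle --- is to certify that this extremal law is admissible, i.e. that the symmetric three-point distribution with $\P(X=\pm\sqrt{3})=1/6$ is actually $1$-sub-Gaussian, so that the bound $2/3$ is attained rather than merely approached. Here I would verify
\[ \E\big[e^{\lambda X}\big] = \frac{2}{3} + \frac{1}{3}\cosh(\sqrt{3}\,\lambda) \leq e^{\lambda^2/2}\,, \quad \lambda\in\R\,, \]
by comparing the two power series term by term: writing both sides as series in $\lambda^{2}$, the coefficients of $1$ and of $\lambda^2$ match, and for the coefficient of $\lambda^{2k}$ with $k\geq 2$ one is reduced to the elementary inequality $3^{k-1}2^{k}k!\leq (2k)!$. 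This holds with equality at $k=2$ and then follows by induction, since the ratio of consecutive values of $(2k)!/(3^{k-1}2^{k}k!)$ equals $(2k+1)/3\geq 5/3>1$. As every power $\lambda^{2k}$ is nonnegative, the termwise comparison yields the desired inequality and confirms that the bound is sharp.
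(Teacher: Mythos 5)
Your proposal is correct and takes essentially the same approach as the paper: the paper's step $\E[X^4]=q\,\E[U^4]\geq q\,\E[U^2]^2=1/q$ (after writing $X=\zeta U$) is exactly your Cauchy--Schwarz inequality $\E[X^2]^2\leq \E[X^4]\,\P(X\neq 0)$ in disguise, with the same equality analysis. The paper also certifies that the extremal three-point law is $1$-sub-Gaussian by the identical term-by-term power-series comparison, i.e.\ the inequality $q^{k-1}\geq 2^k k!/(2k)!$ at $q=1/3$, settled by the same induction you give.
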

\begin{proof}
Let us write $X = \zeta \,U$, where $\zeta\sim\mathrm{Bern}(q)$ and $U$ is a centered random variable. The requirement $\Var[X] = 1$ implies $\E[U^2] = 1/q$. 
If $X$ is $1$-sub-Gaussian, then $\E[X^4] = q \E[U^4] \leq 3$, and since $\E[U^4]\geq \E[U^2]^2 = 1/q^2$ this implies that $q\geq 1/3$. Moreover, the choice $q=1/3$ is possible only if $\E[U^4]\geq \E[U^2]^2$, that is if $U^2 = 1/q$ almost surely. 
The choice 
\begin{equation}\label{eq:def_distribution}
	P = \frac{q}{2}\delta_{-\frac{1}{\sqrt{q}}} + (1-q)\delta_0 + \frac{q}{2}\delta_{\frac{1}{\sqrt{q}}}
\end{equation} 
with $q=1/3$ is indeed the suggestion of Achlioptas, and it is 
$1$-sub-Gaussian. The justification of this choice in~\cite{Achlioptas01} is pretty involved, while we here only need to check that for all $\lambda\in\R$, 
\[\E[e^{\lambda X}] = 1-p + p \cosh\left(\frac{\lambda}{\sqrt{p}}\right) = 1 + \sum_{k=1}^\infty \frac{\lambda^{2k}}{p^{k-1}(2k)!} \leq  e^{\lambda^2/2} = 1 + \sum_{k=1}^\infty \frac{\lambda^{2k}}{2^k\, k!}\]
whenever $q\geq 1/3$.
A sufficient condition for the inequality is that for all $k\geq 1$, 
\begin{equation}\label{eq:condition_subgaussian}
	\frac{1}{q^{k-1}(2k)!} \leq \frac{1}{2^k \,k!} \iff q^{k-1}\geq \frac{2^k\,k!}{(2k)!} \;.
\end{equation}
For $k=1$ this is always true, for $k=2$ it requires that $\displaystyle{q \geq \frac{4\times 2}{24} =\frac{1}{3}}$. A simple induction shows that if $q\geq 1/3$, the condition is also satisfied for all $k\geq 3$. 
Reciprocally, if $q<1/3$ then $\displaystyle{\E[e^{\lambda X}] -  e^{\lambda^2/2}  \sim_{\lambda\to 0}  -c \lambda^4}$ for a positive constant $c$, and $P$ is not $1$-sub-Gaussian. 
\end{proof}
This shows that Achlioptas' suggestion is the only "optimal" choice in terms of sparsity for a variance $1$ and $1$-sub-Gaussian distribution.
Nevertheless, many other choices are possible, such as for example $P = \frac{1}{12}\delta_{-2} +\frac{1}{6}\delta_{-1}+  \frac{1}{2}\delta_0 + \frac{1}{6}\delta_{1} + \frac{1}{12}\delta_{2}$.

\section{Very Sparse Random projections}\label{sec:sparseRP}

We say that a random matrix with independent entries is $q$-sparse if each coefficient has probability at least $1-q$ to be equal to zero. In the previous section, we showed that the minimal probability $q$ for the non-zeros values of a suitable $1$-sub-Gaussian distribution $P$ is $1/3$. In fact, this result was proven in \cite{verySparse06} with more complicated moment arguments. It allows to take a target dimension $d \geq 8\log(n)/(\epsilon^2 - \epsilon^3)$ -- see (\ref{eq:JLcond})-- to get a $\epsilon$-quasi-isometry with nonzero probability, whatever the data $x$.

This does not exclude the possibility of using $q$-sparse projection matrices with $q<1/3$, however. 
Technically speaking, the previous analysis remains quite conservative in that the sub-Gaussianity of $Z_i$ is deduced from the sub-Gaussianity of \emph{each} of its summands. We may expect to gain a lot of sparsity by using the fact that a sum can be a lot more concentrated than each of its components. Figure~\ref{fig:expeFull}
suggests that, at least under certain conditions on the data, much sparser matrices may be considered.
\begin{figure}
    \includegraphics[width=0.5\textwidth]{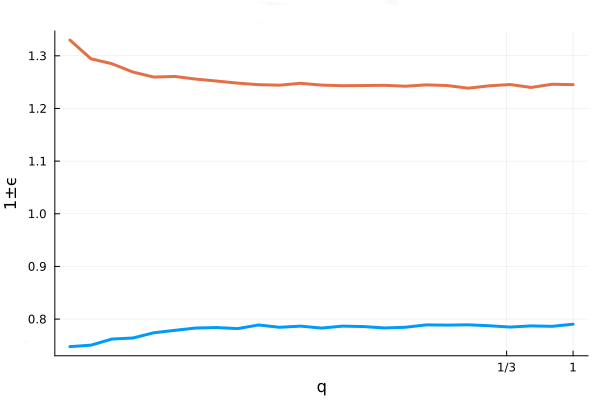}
    \caption{Admissible value $\epsilon$, in function of the sparsity parameter $q$ of the random projection entries. Each data point $x_i\in\R^{10000}$ has independent Gaussian entries. The projection matrix $A$ has independent entries with distribution $\frac{q}{2}\delta_{-\frac{1}{\sqrt{dq}}} + (1-q)\delta_0 + \frac{q}{2}\delta_{\frac{1}{\sqrt{qd}}}$. The target dimension is $d=500$. The blue line shows $\min_{i,j} \frac{\|A(x_i-x_j)\|^2}{\|x_i-x_j\|^2}$, while the red line shows $\max_{i,j} \frac{\|A(x_i-x_j)\|^2}{\|x_i-x_j\|^2}$.\\
    The value $q=1/3$ seems to play no special role, much sparser matrices seem to respect pairwise distances just as well.}
    \label{fig:expeFull}
\end{figure}

In this section, we present two results aimed at quantifying the minimum sparsity $q$ necessary to maintain the quasi-isometry condition with a dimension $d$ on the order of $\log(n)/\epsilon^2$. In particular, Theorem \ref{th:jl_sparse} shows that $q$ can be as small as $\max_{i\neq j}\tfrac{\|x_i - x_j\|^2_{\infty}}{\|x_i - x_j\|_2^2}$. 
To finish, we establish that this is in fact a theoretical limit and that $q$ must be at least of this magnitude.

\subsection{Towards maximal sparsity}\label{subsec:min_sparsity}
Let  $U \in \R^{d \times p}$ be a matrix of iid $1$-sub-Gaussian entries with variance $1$, and $\zeta \in \R^{d \times p}$ be a matrix of iid Bernoulli variables of parameter $q$ independent from $U$ that is used to \emph{mask} a proportion $1-q$ of the coefficients. We assume that for all $i,k$,
\begin{equation}\label{eq:sparse_matrix}
    A_{ik} = \frac{1}{\sqrt{dq}}\zeta_{ik} U_{ik} \;.
\end{equation}
For the coefficients of $U$, one can take Achlioptas' choice $\tfrac{1}{6}\delta_{-\sqrt{3}} +\tfrac{2}{3}\delta_0 + \tfrac{1}{6}\delta_{\sqrt{3}}$ to gain yet another fraction of sparsity on top of the mask. We apply the matrix $A$ to $n$ points $x_1, \dots, x_n$ in a high-dimensional space $\R^p$, and we look for the minimal conditions under which the quasi-isometry property (\ref{eq:JLprop}) still holds with positive probability.
We propose a first result in that direction.
\begin{theorem}\label{thm:JLs1s}
Let $x_1,\dots,x_n\in\R^p$ and $\epsilon>0$. For every $\displaystyle{d\geq \frac{36\log\big(2n^2\big)}{\epsilon^2-\epsilon^3}}$ and every 
\begin{equation}\label{eq:cond_q_1}
q\geq \max_{i\neq j}\frac{18 \|x_i-x_j\|_4^4  + 2\|x_i-x_j\|_\infty^2\|x_i-x_j\|_2^2}{\epsilon^2\|x_i-x_j\|_2^4} \log(2d)\;,\end{equation} it holds with positive probability that 
\begin{equation}\label{eq:JLprop} \forall 1\leq i,j\leq n, \quad (1-\epsilon) \|Ax_i-Ax_j\|^2\leq \|x_i-x_j\|^2 \leq (1+\epsilon) \|Ax_i-Ax_j\|^2 \; . \end{equation}
\end{theorem}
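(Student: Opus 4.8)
The plan is to mimic the data-agnostic proof of Theorem~\ref{thm:classicalJL} as closely as possible, the only genuinely new ingredient being the control of the random mask $\zeta$. Fix a pair $i\neq j$, write $y=x_i-x_j$ and $a_k=y_k/\|y\|_2$ (so $\sum_k a_k^2=1$), and set, for each row $r\in\{1,\dots,d\}$, $Z_r=\sqrt{d}\,(Ay)_r/\|y\|_2=\tfrac{1}{\sqrt q}\sum_{k}a_k\zeta_{rk}U_{rk}$, so that $\|Ay\|_2^2/\|y\|_2^2=\tfrac1d\sum_r Z_r^2$. I would first condition on the whole mask matrix $\zeta$. Given $\zeta$, the variable $Z_r$ is a weighted sum of independent $1$-sub-Gaussian terms, hence $\sigma_r^2$-sub-Gaussian with $\sigma_r^2=\tfrac1q\sum_k \zeta_{rk}a_k^2$, and a direct computation gives $\Var[Z_r\mid\zeta]=\sigma_r^2$ as well. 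Thus $\tilde Z_r:=Z_r/\sigma_r$ is, conditionally on $\zeta$, a $1$-sub-Gaussian variable of variance exactly $1$: this is the ``minimal change'' that lets Proposition~\ref{prop:devX2} (equivalently~\eqref{eq:chernoffUB}--\eqref{eq:chernoffLB}) apply to the $\tilde Z_r$, provided the reweighting factors $\sigma_r^2$ are under control.

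Second, I would introduce the good event $G=\{\forall r,\ |\sigma_r^2-1|\le \epsilon/3\}$. On $G$ the elementary sandwich $(1-\tfrac\epsilon3)\tfrac1d\sum_r\tilde Z_r^2\le \tfrac1d\sum_r Z_r^2\le (1+\tfrac\epsilon3)\tfrac1d\sum_r\tilde Z_r^2$ holds, and since $(1-\tfrac\epsilon3)^2\ge 1-\epsilon$ and $(1+\tfrac\epsilon3)^2\le 1+\epsilon$ for $\epsilon\le 1$, it suffices to apply Proposition~\ref{prop:devX2} to the $\tilde Z_r$ at the level $\epsilon/3$. Because $(\epsilon/3)^2-(\epsilon/3)^3\ge(\epsilon^2-\epsilon^3)/9$, the hypothesis $d\ge 36\log(2n^2)/(\epsilon^2-\epsilon^3)$ forces the conditional failure probability to be at most $2e^{-d((\epsilon/3)^2-(\epsilon/3)^3)/4}\le 1/n^2$; this is exactly where the constant $36$ and the dilation $\epsilon\mapsto\epsilon/3$ come from.

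Then remains the heart of the matter: bounding $\P(G^c)$. Here $q(\sigma_r^2-1)=\sum_k(\zeta_{rk}-q)a_k^2$ is a centered sum of independent variables, to which I would apply Bernstein's inequality: its variance is at most $q\sum_k a_k^4=q\|y\|_4^4/\|y\|_2^4$ and its increments are bounded by $\max_k a_k^2=\|y\|_\infty^2/\|y\|_2^2$. Asking for the deviation $q\epsilon/3$ with a Bernstein exponent at least $\log(2d)$ yields, after using $\epsilon\le 1$, precisely the threshold $q\ge \tfrac{18\|y\|_4^4+2\|y\|_\infty^2\|y\|_2^2}{\epsilon^2\|y\|_2^4}\log(2d)$ of~\eqref{eq:cond_q_1}. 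A union bound over the $d$ rows then controls $\P(G^c)$, and a union bound over the $\binom{n}{2}$ pairs is meant to turn the per-pair estimate into a total failure probability strictly below $1$, which would give the claimed existence.

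The main obstacle is this last concentration step, and specifically the upper deviation. Unlike in Theorem~\ref{thm:classicalJL}, the summands $\tfrac1{\sqrt q}\zeta_{rk}U_{rk}$ are not uniformly sub-Gaussian (their nonzero values scale like $1/\sqrt q$), so a ``dense'' row where too many masks happen to be open makes $\sigma_r^2$, and hence the conditional moment-generating function $(1-2\ell\sigma_r^2)^{-1/2}$, blow up; this is why one cannot integrate the conditional bound directly over $\zeta$ and must instead excise the atypical rows through $G$. Getting the two data-dependent quantities $\|y\|_4^4$ and $\|y\|_\infty^2$ to appear with the right powers of $\epsilon$, and checking that the book-keeping of the two union bounds is consistent with the stated logarithmic factor $\log(2d)$, is the delicate part of the argument; the lower deviation, by contrast, is handled for free by the same conditional sandwich and needs no separate treatment.
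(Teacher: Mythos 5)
Your proposal is essentially the paper's own proof: the same conditioning on the mask $\zeta$, the same per-row good event $\bigl\{|\sigma_r^2-1|\le \epsilon/3\bigr\}$ with $\sigma_r^2=\tfrac{1}{q}\sum_k \zeta_{rk}y_k^2/\|y\|_2^2$ controlled by Bernstein's inequality (which is exactly where the threshold \eqref{eq:cond_q_1} comes from), and the same conditional Chernoff step at level $\epsilon/3$ producing the constant $36$. The one real difference is cosmetic but welcome: by normalizing with the random $\sigma_r$ rather than with the constants $\sqrt{1\pm\epsilon/3}$, your $\tilde Z_r$ are, conditionally on $\zeta$, $1$-sub-Gaussian with variance exactly $1$, so both tails follow at once from the bounds behind Proposition~\ref{prop:devX2}, and you avoid the paper's indicator-restricted MGF computation and its ``left deviations may be treated similarly'' (strictly speaking, you should invoke \eqref{eq:chernoffUB}--\eqref{eq:chernoffLB} rather than the proposition itself, since conditionally on $\zeta$ the rows are independent but not identically distributed; the proofs of those bounds use only independence and per-variable MGF bounds, so this is harmless).

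Your closing worry about the union-bound bookkeeping is well founded --- but it is a defect of the paper's own write-up as much as of your plan, so you have reproduced the argument faithfully, gap included. The good event depends on the pair $y=x_i-x_j$, so $\P(\bar G)$ must itself be union-bounded over the $\binom{n}{2}$ pairs as well as over the $d$ rows. With \eqref{eq:cond_q_1} as stated, Bernstein gives a per-row failure probability $2e^{-\log(2d)}=1/d$, hence at best $\P(\bar G^{(i,j)})\le \tfrac12$ per pair after adjusting constants, and summing $\binom{n}{2}$ such terms exceeds $1$ as soon as $n\ge 3$. The paper's final display masks this by writing a single term $\P(\bar G)<\tfrac12$, which is only meaningful if $G$ is the intersection of all the pair-wise events --- and then the stated $q$ is too small to give $\P(\bar G)<\tfrac12$. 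The fix is routine and costs only a logarithmic factor: require the Bernstein exponent to be at least $\log(4dn^2)$, i.e.\ replace $\log(2d)$ by $\log(4dn^2)$ in \eqref{eq:cond_q_1}; then $\P(\bar G)\le \binom{n}{2}\,d\cdot\tfrac{1}{2dn^2}<\tfrac14$, and both your argument and the paper's close without further difficulty. If you carry out your plan, state the theorem with this strengthened condition on $q$.
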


In particular, Theorem \ref{thm:JLs1s} establishes that there exists a $q$-sparse matrix in $\mathcal{M}_{d,p}(\R)$ satisfying the quasi-isometry condition if $d \gtrsim \log(n)/\epsilon^2$ and $q \gtrsim \max_{i\neq j}\tfrac{\|x_i - x_j\|^2_{\infty}}{\epsilon^2\|x_i - x_j\|_2^2}\log(d)$ up to a constant factor. Hence, if the coefficients of the differences $x_i - x_j$ for $i \neq j$ are of the same order of magnitude $1/\sqrt{p}$, then $q$ is allowed to be of order $\log(d)/(\epsilon^2p)$, which is much smaller than $1/3$ if $p \gg 1/\epsilon^2$. 
The cost in terms of target dimension is only a multiplicative constant (that is not optimized in the previous reasoning). The proof of Theorem \ref{thm:JLs1s} relies on similar ideas as in the preceding section, and will be provided in the next section.

In Theorems~\ref{thm:classicalJL} and~\ref{thm:JLs1s}, the target dimension is of order $\log(n)/\epsilon^2$. It turns out that if we allow slightly larger target dimensions of order $\mathrm{polylog}(n)/\epsilon^2$, then we can decrease even further the sparsity parameter $q$. 
For the sake of completeness, we state this version of the quasi-isometry property (\ref{eq:JLprop}) in high-probability instead of just with positive probability. %This can of course also be done for Theorems~\ref{thm:classicalJL} and~\ref{thm:JLs1s}.
\begin{theorem}\label{th:jl_sparse}
	Let $x_1, \dots, x_n$ be arbitrary vector in $\R^p$ and let $A \in \R^{d \times p}$ be a random matrix with independent entries $A_{ik} = \frac{1}{\sqrt{dq}}\zeta_{ik} U_{ik}$  with
 \begin{equation}\label{eq:condition_q}
	q \geq \max_{i\neq j}\frac{\|x_i - x_j\|^2_{\infty}}{\|x_i - x_j\|_2^2} \; .
\end{equation}
Then for any $\delta \in (0,1)$ and any $d$ such that
\begin{equation}\label{eq:d_0}
	d \geq  \frac{12}{\epsilon^2} \log(3n/\delta) \left( 1 + \sqrt{4\log(nd/\delta)} + 2\log(nd/\delta) \right)^2 \; ,
\end{equation}
the $\epsilon$-quasi-isometry property (\ref{eq:JLprop}) holds with probability at least $1-\delta$.
\end{theorem}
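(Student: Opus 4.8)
The plan is to fix a pair $y=x_i-x_j$ and analyse $\|Ay\|^2=\sum_{k=1}^d W_k^2$ with $W_k=\frac{1}{\sqrt{dq}}\sum_{l=1}^p\zeta_{kl}U_{kl}y_l$, the $d$ rows being independent and each satisfying $\E[W_k^2]=\|y\|^2/d$. The natural idea, reusing the machinery of Sections~\ref{sec:chernoff}--\ref{sec:JL}, is to condition on the Bernoulli mask $\zeta$. Given $\zeta$, each $W_k$ is a weighted sum of independent $1$-sub-Gaussian variables, hence $\sigma_k$-sub-Gaussian with $\sigma_k^2=\frac{1}{dq}\sum_l\zeta_{kl}y_l^2$, and moreover $\E[W_k^2\mid\zeta]=\sigma_k^2$. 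Thus $Z_k:=W_k/\sigma_k$ is, conditionally on $\zeta$, a $1$-sub-Gaussian variable of variance $1$, and $\|Ay\|^2=\sum_k\sigma_k^2Z_k^2$ is a \emph{weighted} sum of squared sub-Gaussians whose terms are conditionally independent but not identically distributed.

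The crux is to control the random weights $\sigma_k^2$ induced by the mask, and here the sparsity assumption~\eqref{eq:condition_q} enters decisively: it guarantees $y_l^2\le\|y\|_\infty^2\le q\|y\|^2$, so that in $S_k:=\sum_l\zeta_{kl}y_l^2$ every summand is bounded by $q\|y\|^2$ while $\E[S_k]=q\|y\|^2$ and $\Var(S_k)\le q\|y\|_4^4\le q^2\|y\|^4$. I would then apply a Bernstein--Bennett inequality to each $S_k$ and a union bound over the $d$ rows and the $n^2$ pairs to obtain, on an event of probability at least $1-\delta/2$, a uniform bound $\max_k\sigma_k^2\le\frac{\|y\|^2}{d}\,C$ with $C$ of the exact poly-logarithmic order $1+\sqrt{4\log(nd/\delta)}+2\log(nd/\delta)$ appearing in~\eqref{eq:d_0}; a second, easier concentration of the same Bernoulli sums gives $\Sigma:=\sum_k\sigma_k^2\in\big[(1-\frac\epsilon2)\|y\|^2,(1+\frac\epsilon2)\|y\|^2\big]$. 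This is precisely the step that produces the extra poly-logarithmic factor in the target dimension, and it is where the sparsity threshold~\eqref{eq:condition_q} turns out to be exactly what is needed.

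On this good event I would run the conditional Chernoff argument of Section~\ref{sec:chernoff} on the weighted sum. For the upper deviation, the Gaussian trick~\eqref{eq:gaussianTrick} applied to each $Z_k$ yields $\E[e^{\ell\sigma_k^2Z_k^2}\mid\zeta]\le(1-2\ell\sigma_k^2)^{-1/2}$ for $\ell<1/(2\max_k\sigma_k^2)$, and bounding $-\frac12\log(1-2\ell\sigma_k^2)$ together with $\sum_k\sigma_k^4\le(\max_k\sigma_k^2)\,\Sigma$ gives a sub-gamma tail whose exponent is of order $\epsilon^2 d/C$. For the lower deviation I would reuse the fourth-moment estimate of Section~\ref{subsec:LB}, namely $\E[e^{-\ell\sigma_k^2Z_k^2}\mid\zeta]\le1-\ell\sigma_k^2+\frac32\ell^2\sigma_k^4$ (valid since $\E[Z_k^4\mid\zeta]\le3$), yielding a matching left tail. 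Choosing $d$ as in~\eqref{eq:d_0} makes each of these conditional probabilities at most $\delta/(3n^2)$; a union bound over the pairs $i<j$, combined with the mask event of the previous step, then gives~\eqref{eq:JLprop} with probability at least $1-\delta$. The main obstacle is the weighted, non-identically-distributed structure: one must simultaneously control the \emph{scale} $\max_k\sigma_k^2$ and the conditional \emph{mean} $\Sigma$ of $\|Ay\|^2$, and --- as already in the data-agnostic case --- the lower deviation is the delicate one, since sub-Gaussianity alone says nothing about left tails and the variance-$1$ / fourth-moment input must be carried through the weighted sum.
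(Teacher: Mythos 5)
Your proposal is correct, and its skeleton matches the paper's: fix a normalized difference $y$, condition on the mask, control $\max_k \sigma_k^2$ by Bernstein's inequality plus a union bound over the $d$ rows and the pairs (your event, with your constant $C$, is exactly the paper's event $G$ of \eqref{eq:event_xi}, with the same factor $\Psi=1+\sqrt{4\log(3nd/\delta)}+2\log(3nd/\delta)$), then run a conditional Chernoff bound using the Gaussian trick \eqref{eq:gaussianTrick} for the right tail and a variance/fourth-moment argument for the left tail. You differ from the paper in two ways. First, you pin the conditional mean $\Sigma=\sum_k\sigma_k^2$ inside $[(1-\epsilon/2)\|y\|^2,(1+\epsilon/2)\|y\|^2]$ by a separate concentration event, while the paper never localizes $\Sigma$: it leaves $\ell\sum_i\|w_i\|^2$ inside the exponential and bounds its moment generating function directly, $\E\big[\exp\big(\ell\sum_i\|w_i\|^2\big)\big]\le\exp\big(\ell+(e-2)\ell^2/d\big)$, absorbing the fluctuation of the mean into a single Chernoff exponent. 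The two devices are interchangeable here, and yours even yields a slightly sharper variance proxy, since $\sum_k\sigma_k^4\le(\max_k\sigma_k^2)\,\Sigma$ is of order $\Psi/d$ whereas the paper's $d\max_i\|w_i\|^4$ is of order $\Psi^2/d$; your right-tail exponent thus scales as $d\epsilon^2/\Psi$ instead of $d\epsilon^2/\Psi^2$. Second, and more substantively, you prove the lower tail conditionally on the mask, transporting $\E[Z_k^4\mid\zeta]\le 3$ (legitimate: conditionally on $\zeta$, $Z_k$ is $1$-sub-Gaussian with variance exactly $1$) through the weighted sum; the paper instead treats the left tail \emph{unconditionally}, computing $\E[(Ay)_i^4]\le 3/d^2+3\|y\|_\infty^2/(d^2q)\le 6/d^2$ directly from \eqref{eq:condition_q} and then repeating the argument of Section~\ref{subsec:LB}, which gives the cleaner bound $\exp(-d\epsilon^2/12)$ with no poly-logarithmic loss on that side. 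Your variant pays the factor $\Psi$ in both exponents, which still fits under \eqref{eq:d_0} because $\Psi\gtrsim\log(nd/\delta)$ leaves ample slack, but verifying that the specific constant $12$ of \eqref{eq:d_0} suffices then requires the constant-chasing you left implicit.
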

Up to a poly-logarithmic factor in $n$ and $\delta$, the minimal dimension $d_0$ satisfying Condition~\eqref{eq:d_0} is still of order $1/\epsilon^2$. 
The parameter $q$ can be chosen as small as $\max_{i\neq j}\frac{\|x_i - x_j\|^2_{\infty}}{\|x_i - x_j\|_2^2}$ while keeping the original guarantee of Johnson Lindenstrauss (\ref{eq:JLprop}) with nonzero probability under the same condition (\ref{eq:JLcond}) up to a poly-logarithmic factor: we require $d \geq d_0(n, 1, \epsilon)$ instead of $d \geq 8\log(n)/(\epsilon^2 - \epsilon^3)$. In comparison to Theorem \ref{thm:JLs1s}, we removed a factor of order $1/\epsilon^2$ in the minimal allowed sparsity $q$, at the cost of a poly-logarithmic factor in the target dimension.

\subsection{About the sparsity conditions~\eqref{eq:condition_q} and~\eqref{eq:cond_q_1}.}
Condition (\ref{eq:condition_q}), can be understood as a "not-too-high-sparsity" condition on the differences $x_i - x_j$, which we formalize as follows. For any constants $\kappa<\kappa' \in (0,1)$ and integer any $s \in \{1, \dots, p\}$, we say that a vector $v$ is $(\kappa,\kappa',s)$-full if $\|v\|_{\infty} \leq \sqrt{\kappa'/s}$ and if it has at least $s$ coordinates whose absolute value are at least equal to $\sqrt{\kappa/s}$, that is
\begin{equation}\label{eq:full}
	\|v\|_{\infty} \leq \sqrt{\kappa'/s}\quad \text{and} \quad|\{k~:~ |v_k| \geq \sqrt{\kappa/s}\}| \geq s \; .
\end{equation}
This implies in particular that $\|v\|^2 \geq \kappa \geq \tfrac{\kappa}{\kappa'}s\|v\|_{\infty}^2$. Hence, if a set of vectors $\{x_1, \dots, x_n\}$ is such that all the differences $x_i - x_j$ are $(\kappa,\kappa',s)$-full for $i \neq j$, then a sufficient condition implying (\ref{eq:condition_q}) is
\begin{equation}\label{eq:condition_suffisante_q}
	q \geq \left(\frac{\kappa'}{\kappa}\right) \frac{1}{s} \; .
\end{equation}
In other words, we can take a matrix $A$ which has only a proportion $q \gtrsim 1/s$ of nonzero coefficients. This condition is for instance very weak in the dense case where the differences $x_i - x_j$ are $(\kappa, \kappa', p)$-full for all $i\neq j$, since it only requires $A$ to have a proportion nonzero coefficients of order $q \gtrsim 1/p$. In that case, all the coefficients of each difference $x_i -x_j$ are uniformly spread over the $p$ dimensions, in the sense that up to constants $\kappa, \kappa'$, $|x_{ik} - x_{jk}| \asymp 1/\sqrt{p}$ for any $k$. 

Condition (\ref{eq:condition_q}) becomes however much stronger when there exists a difference $x_i - x_j$ which is $s$-sparse for a small $s$, that is $|\{ k:~ x_{ik} \neq x_{jk}\}| \leq s$. Indeed, in such a sparse case, $\|x_i - x_j\|^2_{\infty}/\|x_i - x_j\|_2^2 \geq 1/s$, and the condition $q \geq 1/s$ is necessary to satisfy (\ref{eq:condition_q}).

\subsection{Theoretical limit to the sparsity}\label{subsec:optimality}

\begin{figure}
    \includegraphics[width=0.24\textwidth]{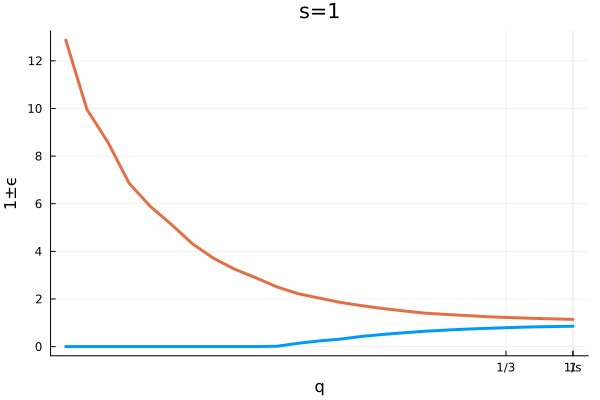}
    \includegraphics[width=0.25\textwidth]{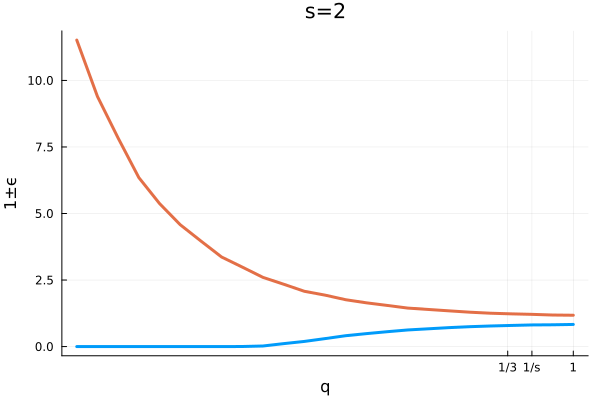}
    \includegraphics[width=0.24\textwidth]{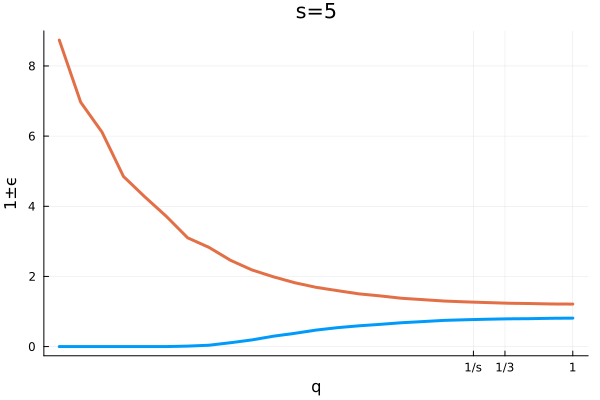}
    \includegraphics[width=0.24\textwidth]{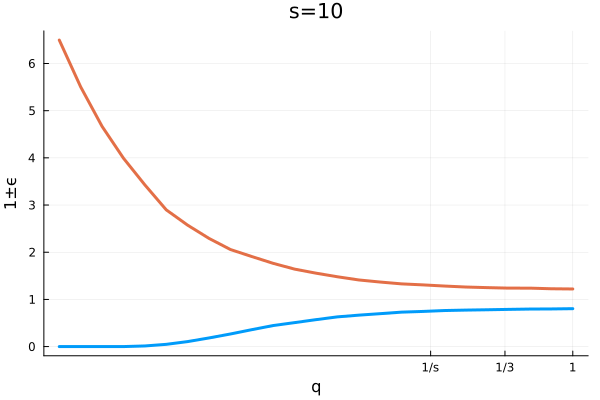}
    \includegraphics[width=0.24\textwidth]{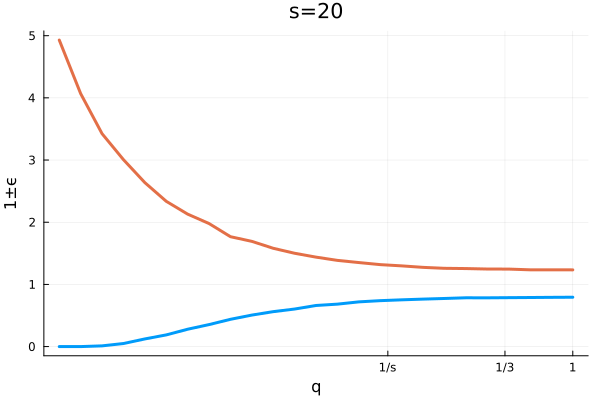}
    \includegraphics[width=0.25\textwidth]{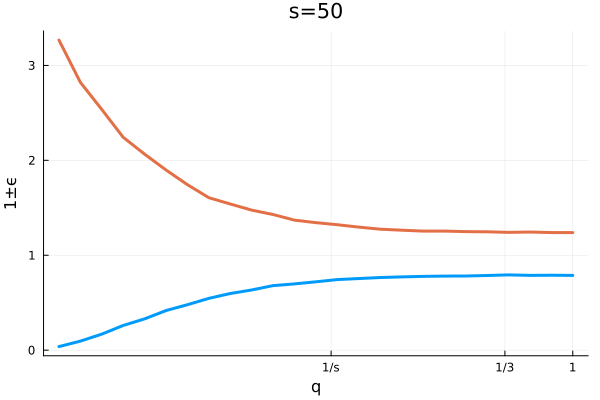}
    \includegraphics[width=0.24\textwidth]{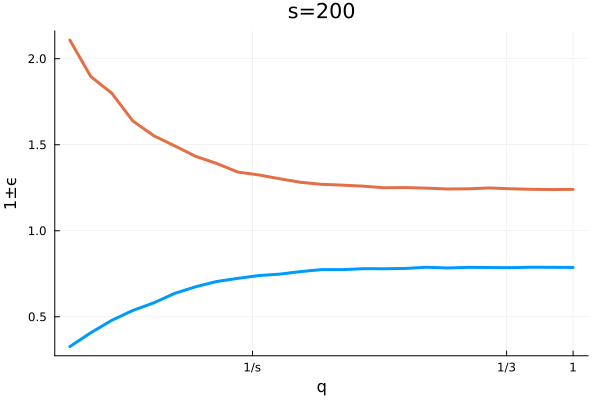}
    \includegraphics[width=0.24\textwidth]{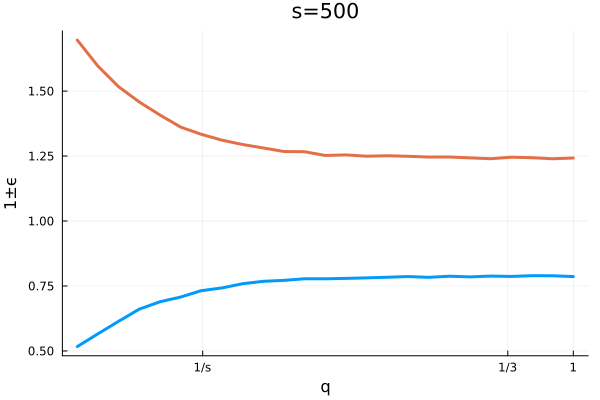}
    \includegraphics[width=0.24\textwidth]{JLsparse_s500.png}
    \includegraphics[width=0.24\textwidth]{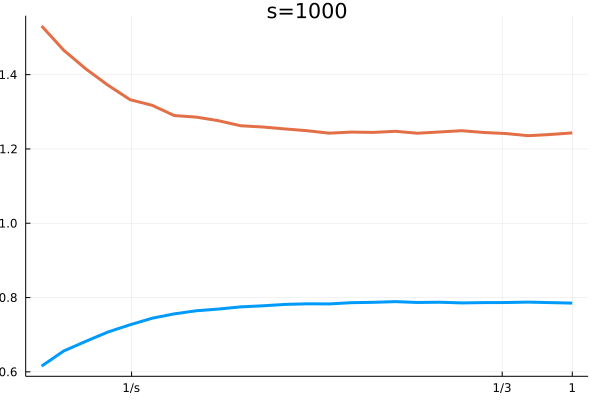}    
    \includegraphics[width=0.24\textwidth]{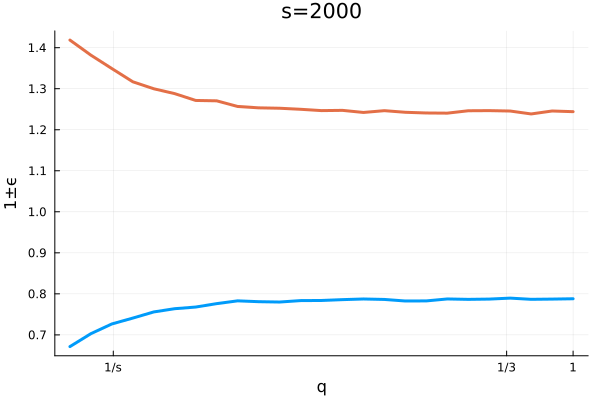}    
    \includegraphics[width=0.24\textwidth]{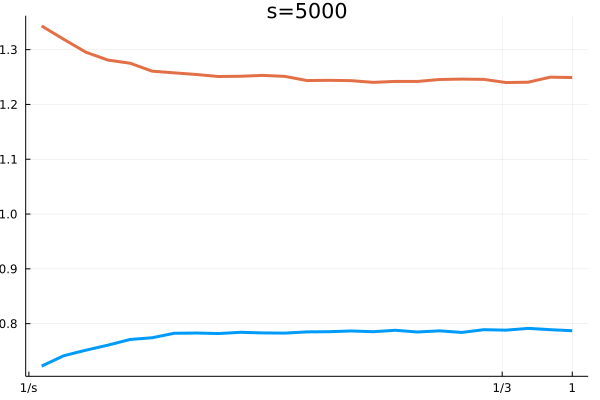}    
        \caption{Admissible value $\epsilon$ in function of the sparsity parameter $q$ of the random projection entries (logarithmic scale), for different values of the sparsity $s$ of the data. Each data point $x_i\in\R^{10000}$ has exactly $s$ non-zero components, which are independent Gaussian entries. The coefficients of the projection matrix $A$ are independent and have distribution $\frac{q}{2}\delta_{-\frac{1}{\sqrt{dq}}} + (1-q)\delta_0 + \frac{q}{2}\delta_{\frac{1}{\sqrt{qd}}}$. The target dimension is $d=500$. The blue line shows $\min_{i,j} \frac{\|A(x_i-x_j)\|^2}{\|x_i-x_j\|^2}$, while the red line shows $\max_{i,j} \frac{\|A(x_i-x_j)\|^2}{\|x_i-x_j\|^2}$. Observe that the scales of the ordinates are different between the plots.\\
    It can be observed that quasi-isometry is ensured whenever $q\times s$ is sufficiently large. }
    \label{fig:expeSparse}
\end{figure}

\begin{figure}
    \includegraphics[width=0.24\textwidth]{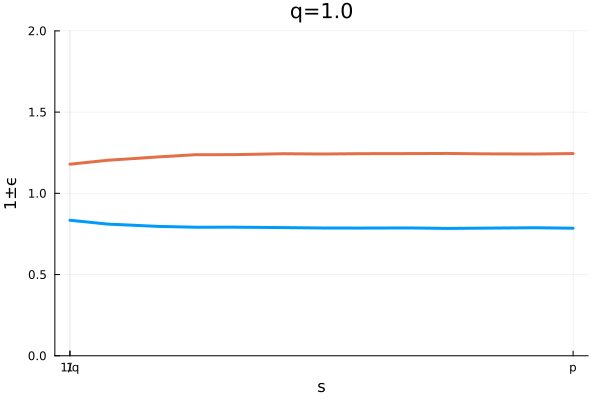}
    \includegraphics[width=0.24\textwidth]{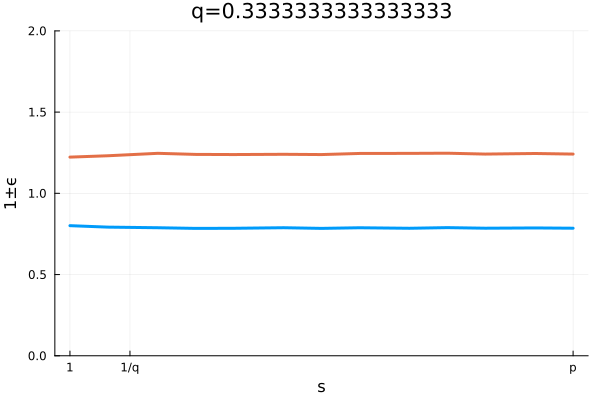}
    \includegraphics[width=0.24\textwidth]{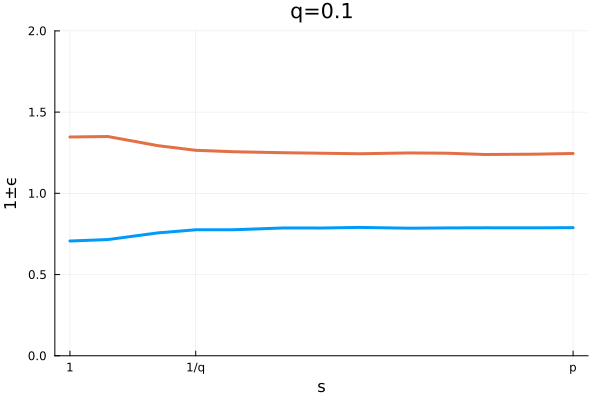}
    \includegraphics[width=0.25\textwidth]{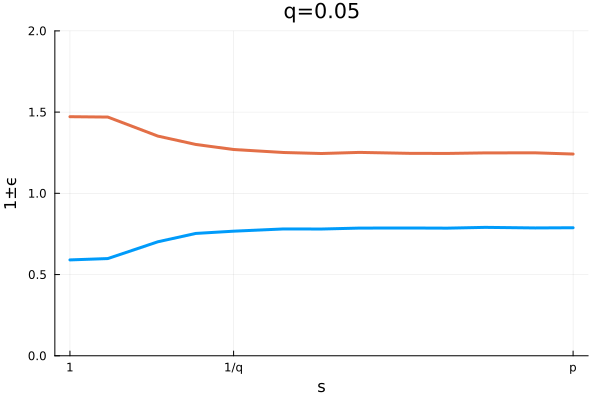}
    \includegraphics[width=0.24\textwidth]{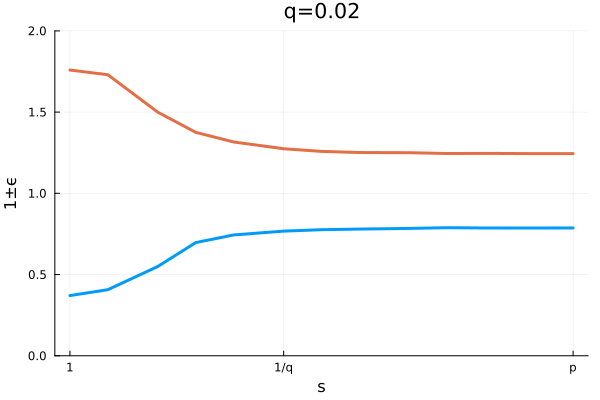}
    \includegraphics[width=0.24\textwidth]{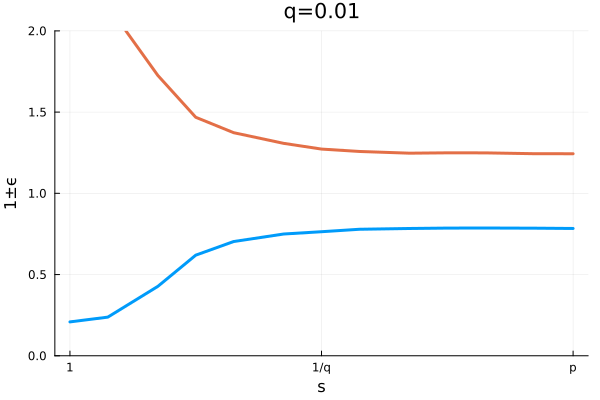}
    \includegraphics[width=0.24\textwidth]{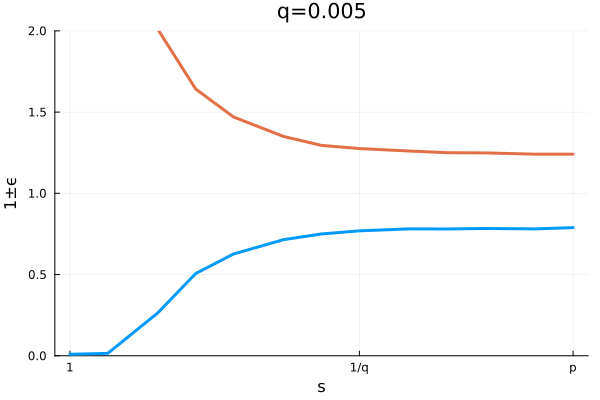}
    \includegraphics[width=0.25\textwidth]{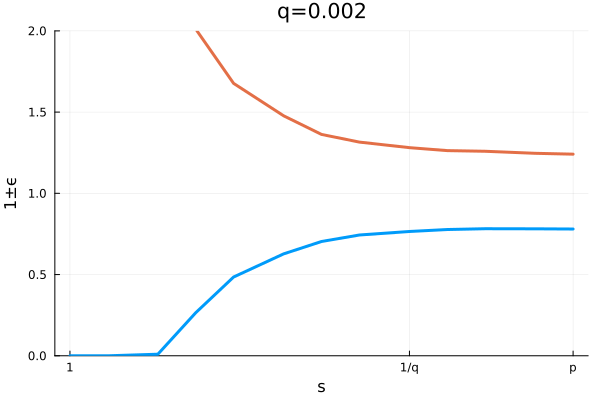}
    \includegraphics[width=0.24\textwidth]{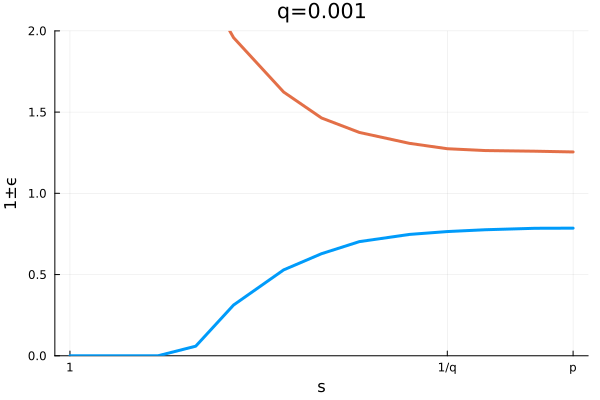}
    \includegraphics[width=0.24\textwidth]{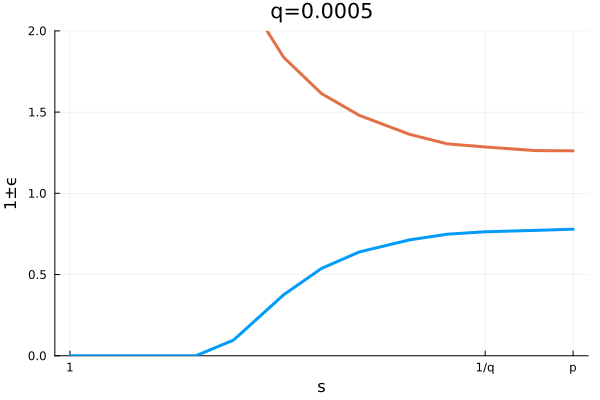}
    \includegraphics[width=0.24\textwidth]{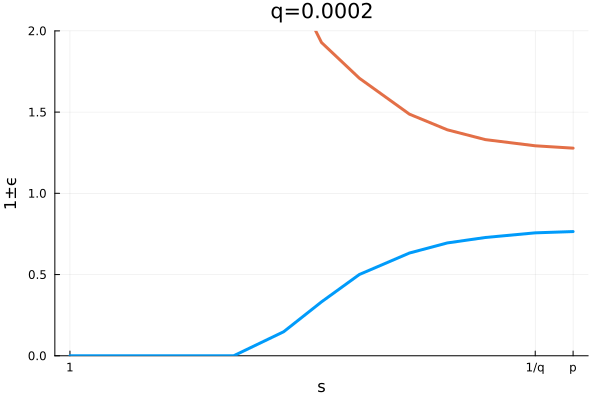}    
    \includegraphics[width=0.24\textwidth]{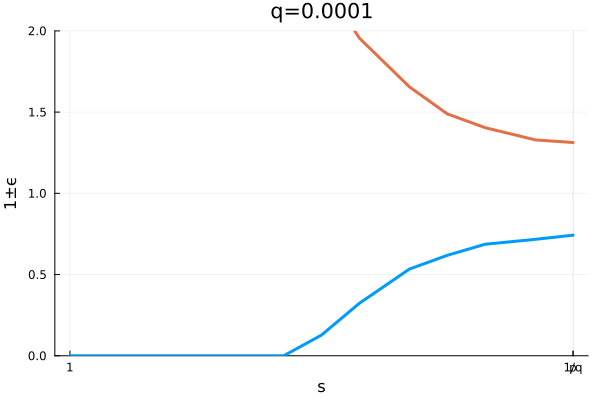}
    
    \caption{Admissible value $\epsilon$, in function on the sparsity parameter $s$ of the data (logarithmic scale), for different values of the sparsity $q$ of the projection matrix. Each data point of the $n=100$ data point $x_i\in\R^{10000}$ has independent coefficients that are non-zero with probability $s/p$; the non-zero coefficients are independent and uniformly distributed on $\{-1,+1\}$. The projection matrix $A$ has independent entries with distribution $\frac{q}{2}\delta_{-\frac{1}{\sqrt{dq}}} + (1-q)\delta_0 + \frac{q}{2}\delta_{\frac{1}{\sqrt{qd}}}$. The target dimension is $d=500$. The blue line shows $\min_{i,j} \frac{\|A(x_i-x_j)\|^2}{\|x_i-x_j\|^2}$, while the red line shows $\max_{i,j} \frac{\|A(x_i-x_j)\|^2}{\|x_i-x_j\|^2}$.\\
    We observe that the values $q\geq 1/3$ ensure the quasi-isometry property whatever the data. For smaller values of $q$, the number $s$ of non-zero coefficients needs to be larger than $1/q$. 
    }
    \label{fig:expeSparse_q}
\end{figure}

It turns out that the condition $q \gtrsim 1/s$ is  in some sense optimal if we impose the dimension $d$ to be of order $1/\epsilon^2$ up to a poly-logarithm. 
Experimentally, Figures~\ref{fig:expeSparse} and~\ref{fig:expeSparse_q} dually confirm that random projections remain equally efficient as long as the proportion of non-zero coefficients is clearly above the minimum between $1/s$ and $1/3$. 
%Indeed, a consequence of Theorem \ref{th:optimality} is that if $d \asymp 1/\epsilon^2$ up to a polylog and if $q \ll 1/s$, then $\|Ay\|^2 \not \in [1-\epsilon, 1+\epsilon]$ with positive probability, for any $(1,1,s)$-full vector $y$ -- see (\ref{eq:full}) for the definition of $(\kappa, \kappa',s)$ full vectors. 
The following optimality result is based on the following intuition. Let $y$ be $(1,1,s)$-full vector, that is $\|y\|=1$ and $y_i\in \{-1/\sqrt{s}, 1/\sqrt{s}, 0\}$. If $A \in \R^{d\times p}$ is any random matrix whose coefficients are independent and such that for all $(i,k) $, $\P(A_{ik}\neq 0) \leq q$, then,
\[
	\P(Ay = 0) \geq \P\Big(\forall (i,k) \in d\times S, A_{ik}=0\Big) 
	= (1-q)^{ds} 
	= e^{ds\ln(1-q)}
	\geq e^{-ds\tfrac{q}{1-q}} \; .
\]
Hence, if $q \leq 1/(2ds)$, then $\P(Ay = 0) > 1/e$. In other words, there is no hope to satisfy the quasi-isometry property (\ref{eq:JLprop}) with high probability if $q \leq 1/(2ds)$. This argument misses however the regime where $\epsilon^2/s \lesssim q \lesssim 1/s$ if $d \asymp 1/\epsilon^2$. The following theorem provides a general optimality result for all $q < 1/(240s)$, and hence fills the gap between $\epsilon^2/s$ and $1/s$ when $d \asymp 1/\epsilon^2$ up to a poly-logarithmc factor.

\begin{theorem}\label{th:optimality}
	Assume that $A \in \R^{d \times p}$ has iid coefficients distributed according to Archilotpas' distribution with parameter $q$ -- see Eq.~\eqref{eq:def_distribution}. Let $y \in \R^p$ be a unit vector with coordinates in $\{-1/\sqrt{s}, 1/\sqrt{s}, 0\}$. If $dqs \epsilon^2 \leq 1/2$, $qs < 1/240$, then 
		\begin{align*}
			\P(\|Ay\|^2 \in [1-\epsilon, 1+\epsilon]) \leq 1- e^{-5000}\; .
		\end{align*}
\end{theorem}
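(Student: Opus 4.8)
The plan is to collapse $\|Ay\|^2$ into a single integer-valued statistic and then show that this statistic is \emph{too dispersed} to land in the prescribed window with overwhelming probability. Let $S$ be the support of $y$, so $|S|=s$ and $y_k=\pm 1/\sqrt s$ on $S$; the nonzero entries of $A$ equal $\pm 1/\sqrt{dq}$ with probability $q/2$ each. Because these entries are symmetric, the signs of the $y_k$ are immaterial, and I would introduce $\eta_{ik}\in\{-1,0,1\}$ (iid, $\P(\eta_{ik}=\pm1)=q/2$) so that $(Ay)_i=\frac{1}{\sqrt{dqs}}\sum_{k\in S}\eta_{ik}$. Setting $W_i:=\sum_{k\in S}\eta_{ik}$ and $\mu:=dqs$, this gives
\[
\|Ay\|^2=\frac{1}{dqs}\sum_{i=1}^d W_i^2=\frac{T}{\mu},\qquad T:=\sum_{i=1}^d W_i^2,\quad \E[T]=\mu,
\]
where $T$ is a sum of $d$ iid \emph{integer}-valued terms. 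The hypotheses read $\mu\epsilon^2\le 1/2$ and $sq<1/240$, and the target event becomes $\{T\in[\mu(1-\epsilon),\mu(1+\epsilon)]\}$, an interval of half-width $\mu\epsilon\le\sqrt{\mu/2}$ around the mean.

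The decisive observation is that $\Var(T)\asymp\mu$: a direct computation yields $\Var(W_i^2)=sq(1-3q)+2s^2q^2$, so $\Var(T)$ lies between $\mu(1-3q)$ and $\mu(1+2sq)$, both $\Theta(\mu)$ under $sq<1/240$. Hence the target window has half-width $\mu\epsilon\le\sqrt{\mu/2}$, that is at most $1/\sqrt{2(1-3q)}\approx 0.71$ standard deviations of $T$, so it can capture only a constant fraction of the mass of $T$. I would quantify the escape probability in two regimes. For small $\mu$ (say $\mu\le 5000(1-q)$), it suffices to bound the chance that $T$ vanishes: since $T=0$ whenever every entry of $A$ indexed by $S$ is zero,
\[
\P(T=0)\ge (1-q)^{ds}\ge e^{-dsq/(1-q)}=e^{-\mu/(1-q)}\ge e^{-5000},
\]
and for $\epsilon<1$ the value $0$ lies outside $[1-\epsilon,1+\epsilon]$, so the claimed escape probability already holds.

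For large $\mu$ (the complementary range, where $\P(T=0)$ is negligible) I would establish \emph{anti}-concentration by applying the Paley--Zygmund inequality to $Z:=(T-\mu)^2\ge0$. This requires the second and fourth central moments of $T$: writing $X_i:=W_i^2-sq$, one has $\E Z=\Var(T)=d\,\E[X_i^2]$ and $\E Z^2=\E[(T-\mu)^4]=d\,\E[X_i^4]+3d(d-1)(\E X_i^2)^2$. The constraint $sq<1/240$ pins $T$ to a Poisson-like regime: $\E[X_i^2]=\Theta(sq)$ and, after bounding $\E[W_i^8]$ via the binomial count of nonzero terms in a row, $\E[X_i^4]=O(sq)$, so $\E Z=\Theta(\mu)$ while $d\,\E[X_i^4]=O(\mu)$ is dominated by $3(\E Z)^2=\Theta(\mu^2)$ once $\mu$ is large; thus $\E Z^2\le 4(\E Z)^2$. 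Choosing $\theta:=(\mu\epsilon)^2/\E Z\le(\mu/2)/\Var(T)\le 0.51<1$, the inequality gives
\[
\P\big(|T-\mu|>\mu\epsilon\big)=\P(Z\ge\theta\,\E Z)\ge(1-\theta)^2\,\frac{(\E Z)^2}{\E Z^2}\ge(0.49)^2\cdot\tfrac14,
\]
a universal constant vastly larger than $e^{-5000}$. Combining both regimes yields $\P\big(\|Ay\|^2\notin[1-\epsilon,1+\epsilon]\big)\ge e^{-5000}$, which is the assertion.

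The main obstacle is the large-$\mu$ step, for two reasons. First, it is a \emph{lower} bound on a deviation probability (anti-concentration), which is genuinely more delicate than the Chernoff-type \emph{upper} bounds used everywhere else in the paper; the second-moment / Paley--Zygmund route trades sharpness for robustness, which is exactly why the very weak conclusion $1-e^{-5000}$ is comfortable. Second, it is precisely here that the hypothesis $sq<1/240$ earns its keep: it guarantees that each $W_i^2$ is essentially $\{0,1\}$-valued with mean $\asymp sq$ and tightly controlled fourth moment, so that $T$ genuinely behaves like a Poisson variable with $\Var(T)\asymp\mu$ and bounded kurtosis — which is what makes a window of half-width $\le\sqrt{\mu/2}$ escapable with constant probability. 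Verifying the moment bounds $\E[X_i^2]=\Theta(sq)$ and $\E[X_i^4]=O(sq)$ (through $\E[W_i^8]$) is the one genuinely computational piece, but it is routine given the slack in the target constant.
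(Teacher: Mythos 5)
Your proof is correct, and it keeps the paper's two-regime skeleton: for small $dqs$ you lower-bound $\P(Ay=0)$ exactly as the paper does, and for large $dqs$ you prove anti-concentration of $\|Ay\|^2$ from its second and fourth central moments. Where you genuinely differ is in how the second regime is executed. The paper applies a Chebyshev-type bound to $(\|Ay\|^2-1)^2$ and must therefore control the raw moments $\E\big[\|Ay\|^4\big]$, $\E\big[\|Ay\|^6\big]$, $\E\big[\|Ay\|^8\big]$ through a five-case combinatorial matching of indices (the sets $I_1,\dots,I_5$), including a somewhat delicate step where a \emph{lower} bound on $\E\big[\|Ay\|^6\big]$ is needed to cancel the constant and $1/(dqs)$ terms arising from $\E\big[\|Ay\|^8\big]$. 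Your reduction $\|Ay\|^2=T/\mu$ with $T=\sum_{i=1}^d W_i^2$, $\mu=dqs$, where the $W_i$ are iid sums of $s$ symmetric $\{-1,0,1\}$-valued variables, eliminates that bookkeeping: the central moments of $T$ follow from the closed-form identities $\E[(T-\mu)^2]=d\,\Var(W_1^2)$ and $\E[(T-\mu)^4]=d\,\E[X_1^4]+3d(d-1)\big(\E[X_1^2]\big)^2$ with $X_i=W_i^2-sq$, so the only combinatorics left is the eighth moment of a single row sum, and no cancellation is required. The probabilistic device itself (Paley--Zygmund applied to $Z=(T-\mu)^2$, which is just $(\|Ay\|^2-1)^2$ rescaled) is essentially interchangeable with the paper's Chebyshev step, since both consume exactly $\E[Z]$ and $\E[Z^2]$; your constants do check out ($\theta\le 1/(2(1-3q))\le 0.51$, escape probability at least $(0.49)^2/4$ once $\E[Z^2]\le 4(\E Z)^2$, which holds on your regime $\mu\ge 5000(1-q)$ because $d\,\E[X_1^4]=O(\mu)\ll(\E Z)^2$). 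What the paper's heavier computation buys is a sharper sub-case statement (success probability at most $1/2$ when $dqs$ is large) versus your $1-(0.49)^2/4\approx 0.94$; what your route buys is a substantially lighter and more transparent argument. Two details to spell out in a final write-up: (i) as in the paper, the small-$\mu$ case implicitly uses $\epsilon<1$ so that $0\notin[1-\epsilon,1+\epsilon]$; (ii) the constant in $\E[X_1^4]\le C\,sq$ should be made explicit, e.g.\ via $\E[W_1^8]\le sq\big(1+63\,qs+210\,(qs)^2+105\,(qs)^3\big)$ together with $qs<1/240$ --- routine, given the slack in the target constant.
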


In other words, if $d \asymp 1/\epsilon^2$ up to a polylog, then Theorem \ref{th:optimality} only requires that $q \lesssim 1/s$ up to a polylog. We take a probability $1- e^{-5000}$ that is very close to $1$ in the theorem to match the two regimes where $dqs \gtrsim 1$ and $dqs \lesssim 1$. In the proof of  Theorem \ref{th:optimality}, we also show that in the sub-case where $dqs \geq 1/2048$, the probability of success $\P(\|Ay\|^2 \in [1-\epsilon, 1+\epsilon])$ is smaller than $1/2$.
The proof, which is given at the end of section \ref{sec:proofs} relies on the Tchebychev's inequality and on a control of the moments of order $2$, $4$, $6$ and $8$ of the random variable $\|Ay\|$.

\section{En passant: concentration of non-negative quadratic forms}\label{sec:HW}
The upper bound given in Section~\ref{subsec:min_sparsity} is in fact strongly connected to the Hanson-Wright inequality for sub-Gaussian random variables -- see e.g. \cite{rudelson2013hanson}, and \cite{simpleJL24} for an application to the Johnson-Lindenstrauss lemma. This inequality is known with precise constants for Gaussian chaos of order $2$ -- see Example 2.12 in \cite{boucheron2013concentration} -- and it has been generalized with non-explicit constants to sub-Gaussian vectors, e.g. in Theorem 6.2.1 of ~\cite{vershynin19HDP}. In the case where the quadratic form is assumed to be non-negative, the constants were established to be the same as in the Gaussian case in \cite{hsu2012tail}. 
For completeness, we conclude this paper by giving a succinct statement and proof of the Hanson-Wright inequality for sub-Gaussian vectors when the quadratic form is assumed to be non-negative. 

A random vector $X \in \R^n$ is said to be $1$-sub-Gaussian if, for any $u\in\R^d$, $\E[e^{u^T X}] \leq \exp\big(\|u\|^2_2/2\big)$.
In particular, if $Z_{1}, \dots, Z_d$ are independent real random variable and $1$-sub-Gaussian, that is $\E[e^{\lambda Z_i}] \leq e^{\lambda^2/2}$, then for any orthogonal matrix $P$, $PZ$ is a $1$-sub-Gaussian vector. In contrast to \cite{rudelson2013hanson}, we do not require in the following proposition the coordinates of $X$ to be independent. 

\begin{proposition}[See also Theorem 2.1 of \cite{hsu2012tail}]\label{prop:right_tail}
	Let $S$ be any $d \times d$ symmetric matrix with non-negative eigenvalues, and $X$ be a $1$-sub-Gaussian vector.
    Then, for any $\ell \in [0,1/(2\|S\|_{op}))$,
    \begin{align*}
        \E_X[e^{\ell X^T S X}] \leq \exp\left(\ell  \Tr(S) + \tfrac{\ell ^2 \|S\|_F^2}{1-2\ell \|S\|_{op}}\right) \; .
    \end{align*}
\end{proposition}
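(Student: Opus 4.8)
The plan is to reduce the sub-Gaussian quadratic form to a genuine Gaussian quadratic form, using at the level of vectors exactly the decoupling device already exploited in \eqref{eq:gaussianTrick}. Since $S$ has non-negative eigenvalues it admits a symmetric square root $S^{1/2}$, so that $X^T S X = \|S^{1/2}X\|^2 \geq 0$. For a standard Gaussian vector $G\sim\mathcal{N}(0,I_d)$ and any $v\in\R^d$ one has $\E_G\big[e^{\sqrt{2\ell}\langle v,G\rangle}\big]=e^{\ell\|v\|^2}$, because $\langle v,G\rangle\sim\mathcal{N}(0,\|v\|^2)$. Applying this pointwise with $v=S^{1/2}X$ yields the identity
\[ e^{\ell X^T S X} \;=\; \E_G\big[e^{\sqrt{2\ell}\,(S^{1/2}X)^T G}\big] \;=\; \E_G\big[e^{\sqrt{2\ell}\,X^T (S^{1/2}G)}\big]\;. \]

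First I would take $\E_X$ of both sides and interchange the two expectations by Tonelli's theorem (the integrand is non-negative, so no integrability question arises), obtaining $\E_X\big[e^{\ell X^T S X}\big]=\E_G\big[\E_X\big[e^{\sqrt{2\ell}\,(S^{1/2}G)^T X}\big]\big]$. Then I would invoke the vector sub-Gaussianity of $X$ in the $G$-measurable direction $u=\sqrt{2\ell}\,S^{1/2}G$: since $\E_X[e^{u^T X}]\leq e^{\|u\|^2/2}$ and $\|u\|^2=2\ell\,G^T S G$, the inner expectation is at most $e^{\ell\,G^T S G}$, whence
\[ \E_X\big[e^{\ell X^T S X}\big] \;\leq\; \E_G\big[e^{\ell\,G^T S G}\big]\;. \]
This is the crux of the argument: it trades the arbitrary sub-Gaussian $X$ for a true Gaussian $G$ while preserving the matrix $S$ exactly, and it uses only the single scalar inequality $\E_X[e^{u^T X}]\leq e^{\|u\|^2/2}$, so no independence of the coordinates of $X$ is needed (precisely the point contrasted with \cite{rudelson2013hanson}).

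It then remains to evaluate and bound the Gaussian quadratic form, which is routine. Diagonalizing $S=P\,\Diag(\lambda_1,\dots,\lambda_d)\,P^T$ with $\lambda_i\geq 0$ and using rotational invariance of $G$, one has $G^T S G=\sum_i\lambda_i\tilde G_i^2$ with $\tilde G_i$ iid standard Gaussian, so for $\ell<1/(2\|S\|_{op})=1/(2\max_i\lambda_i)$,
\[ \E_G\big[e^{\ell G^T S G}\big]=\prod_{i=1}^d(1-2\ell\lambda_i)^{-1/2}, \qquad \ln\E_G\big[e^{\ell G^T S G}\big]=-\tfrac12\sum_{i=1}^d\ln(1-2\ell\lambda_i)\;. \]
I would finish with the elementary scalar bound $-\tfrac12\ln(1-u)\leq\tfrac{u}{2}+\tfrac{u^2}{4(1-u)}$ for $u\in[0,1)$, which is immediate by comparing Taylor coefficients since $\tfrac{1}{2k}\leq\tfrac14$ for every $k\geq 2$, applied to $u=2\ell\lambda_i$. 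Summing the resulting per-eigenvalue bounds gives $\ell\sum_i\lambda_i+\sum_i\tfrac{\ell^2\lambda_i^2}{1-2\ell\lambda_i}$; recognizing $\sum_i\lambda_i=\Tr(S)$ and $\sum_i\lambda_i^2=\|S\|_F^2$, and bounding each denominator below by $1-2\ell\|S\|_{op}\leq 1-2\ell\lambda_i$, produces the claimed exponent $\ell\Tr(S)+\tfrac{\ell^2\|S\|_F^2}{1-2\ell\|S\|_{op}}$. The conceptual heart is the Gaussian decoupling step of the second paragraph; the only mildly delicate point is this final packaging of the eigenvalue-wise estimates into the operator- and Frobenius-norm form, where one must be careful to replace every $\lambda_i$ in the denominators by $\|S\|_{op}$ rather than summing the exact expression.
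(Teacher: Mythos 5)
Your proof is correct and takes essentially the same route as the paper's: both rest on the Gaussian decoupling identity plus Fubini/Tonelli and vector sub-Gaussianity to obtain $\E_X\big[e^{\ell X^T S X}\big] \leq \E_G\big[e^{\ell\, G^T S G}\big]$, and then bound the Gaussian chaos eigenvalue-wise. The only differences are cosmetic — you decouple at the vector level via $S^{1/2}$ where the paper diagonalizes first and works coordinate-by-coordinate with $Y=PX$, and you establish the scalar estimate $-\tfrac12\log(1-u)-\tfrac{u}{2}\leq \tfrac{u^2}{4(1-u)}$ by Taylor-coefficient comparison rather than the paper's integral representation.
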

As a consequence of Proposition \ref{prop:right_tail} and following the same computations as in Theorem 10 of \cite{boucheron2013concentration}, it holds that with
probability at least $1- \delta$,
    \begin{align*}
        X^T S X \leq \Tr(S) + \sqrt{4\|S\|_F^2\log(1/\delta)} + 2\|S\|_{op}\log(1/\delta) \; ,
    \end{align*}
for any $\delta \in (0, 1)$. In comparison to Theorem 6.2.1 of \cite{vershynin19HDP}, the constants are explicit and the same as in the Gaussian case. This does however apply only to non-negative matrices.
Proposition \ref{prop:right_tail} can be deduced from the proof of Thereom 2.1 in \cite{hsu2012tail} in the case $\mu=0$ and $\sigma = 1$, but we still provide a short proof as the underlying ideas are at the core of the upper bounds in the proofs of this paper.

\begin{proof}[Proof of Proposition \ref{prop:right_tail}]
	Let us write $S = P\,\Diag(\mu_1, \dots, \mu_n)\,P^T$ , where $\mu_1 \geq \dots \geq \mu_n \geq 0$ and $P$ is an orthogonal matrix. Let also $Y$ be the sub-Gaussian vector equal to $PX$, and $\ell > 0$. By Fubini's theorem, 
	\begin{align*}
		\E_X\left[e^{\ell X^T S X}\right] &= \E_X\left[e^{\sum_{i=1}^n \ell \mu_i Y_i^2}\right] = \E_G \left[\E_X\left[e^{\sum_{i=1}^n \sqrt{2\ell \mu_i} Y_iG_i}\right]\right] \leq \E_G \left[e^{\sum_{i=1}^n \tfrac{1}{2}\ell \mu_iG_i^2}\right] \; ,
	\end{align*}
where $G_1, \dots, G_n$ are independent standard and centered Gaussian random variables. Then,
\begin{align*}
    E_G \left[e^{\sum_{i=1}^n \tfrac{1}{2}\ell \mu_iG_i^2}\right] 
    = \prod_{i=1}^n \frac{1}{\sqrt{1 - 2\ell  \mu_i}} 
    = e^{\ell  \Tr(S)}\prod_{i=1}^n\exp\left(- \tfrac{1}{2}\log (1 - 2\ell \mu_i) - \ell  \mu_i\right)
    \leq \exp\left(\ell  \Tr(S) + \sum_{i=1}^n \tfrac{\ell ^2 \mu_i^2}{1-2s\mu_i}\right) \;,
\end{align*}
where the first inequality comes from the inequality $-\tfrac{1}{2}\log(1-2\ell \mu_i) - \ell \mu_i = \int_0^{\ell}\tfrac{2s\mu_i}{1-2s\mu_i}ds \leq \tfrac{\ell^2\mu^2_i}{1-2\ell\mu_i}$.
We conclude the proof by remarking that $\sum_{i=1}^n \tfrac{\ell ^2 \mu_i^2}{1-2s\mu_i} \leq \tfrac{\ell ^2 \|S\|_F^2}{1-2s\|S\|_{op}}$.
 
\end{proof}

\section{Proofs of Theorems~\ref{thm:JLs1s}, ~\ref{th:jl_sparse} and  ~\ref{th:optimality}}\label{sec:proofs}
\noindent
While the proof of Theorem~\ref{thm:JLs1s} remains as close as possible to that of Theorem~\ref{thm:classicalJL}, it provides some intuitions for the proof of Theorem~\ref{th:jl_sparse}. %  and \ref{th:optimality}, which can be read independently.
\subsection{Proof of Theorem \ref{thm:JLs1s}}
Let  $U \in \R^{d \times p}$ be a matrix of iid $1$-sub-Gaussian entries with variance $1$, and $\zeta \in \R^{d \times p}$ be a matrix of iid Bernoulli variables of parameter $q$ independent from $U$ that is used to \emph{mask} a proportion $1-q$ of the coefficients. We assume that for all $i,k$,
\begin{equation}\label{eq:sparse_matrix}
    A_{ik} = \frac{1}{\sqrt{dq}}\zeta_{ik} U_{ik} \;,
\end{equation}
and write as before $Y = Ay$ and $Z_i=\frac{\sqrt{d}Y_i}{\|y\|_2}$.

Following the previous analysis, we need to bound $\E\big[\exp(\lambda Z_i^2)\big]$, and we know how to do it from $\E\big[\exp(\lambda Z_i)\big]$ when $Z_i$ is sub-Gaussian thanks to the argument of Inequality~\eqref{eq:gaussianTrick}. 
Since $Z_i = \sum_{k=1}^p y_k\frac{\zeta_{ik}U_{ik}}{\|y\|_2\sqrt{q}}$ is a sum of many small contributions, for any fixed $\lambda$ we can bound $\ln \E\big[\exp(\lambda Z_i)\big]$ using only the local behavior of  $\psi(\lambda):=\ln \E\big[\exp(\lambda U_{i,k})\big]$ around $0$, which is of order $\lambda^2/2$ even when $\psi$ is not upper-bounded by that quantity. But using Inequality~\eqref{eq:gaussianTrick} would require a \emph{uniform} control of $\psi$, which we cannot provide. We are hence obliged to take another path, by conditioning on the mask variables $(\zeta_{ik})$ and focusing on the "typical" behavior. 

Namely, let for each $i\in\{1,\dots,d\}$ and for $0\leq \epsilon\leq 1$ let 
\[G_i = \left\{ \left(1-\frac{\epsilon}{3}\right)\|y\|_2^2 \leq \sum_{k=1}^p \frac{y_i^2}{q}\zeta_{ik} \leq \left(1+\frac{\epsilon}{3}\right)\|y\|_2^2 \right\} \;.\]
By Bernstein's inequality applied on the $\left[0, \frac{\|y\|_\infty^2}{q\|y\|_2^2}\right]$-valued independent variables $\left(\frac{y_i^2}{q\|y\|_2^2}\zeta_{ik}\right)_{1\leq k\leq p}$, which have variance $\frac{y_i^4}{q^2\|y\|_2^4}\;q(1-q)\leq \frac{y_i^4}{q\|y\|_2^2}$, 
\begin{align*}
\P(\bar{G_i}) \leq 2 \exp\left(-\frac{\epsilon^2 /18}{\sum_{k=1}^p \frac{y_i^4}{q\|y\|_2^4}  + \frac{\|y\|_\infty^2 \epsilon}{9q\|y\|_2^2}}\right) \leq 2\exp\left(-\frac{q\epsilon^2\|y\|_2^4}{18 \|y\|_4^4  + 2\|y\|_\infty^2\|y\|_2^2}\right)\;,
\end{align*}
which is smaller than $1/(2d)$ as soon as 
\begin{equation}\label{eq:qmin}
q\geq \frac{18 \|y\|_4^4  + 2\|y\|_\infty^2\|y\|_2^2}{\epsilon^2\|y\|_2^4} \log(2d)\;.
\end{equation}
 
On the event $G_i$, the behaviour of $Y_i$ is as expected: conditioning on $(\zeta_{i,k})_k$,
\begin{align*}
\E\left[\exp\left(\lambda Z_i\right) \1_{G_i}\right]&= \prod_{k=1}^p \E\left[\exp\left(\lambda y_k\frac{\sqrt{d} \zeta_{ik}U_{ik}}{\|y\|_2\sqrt{dq}}\right) \1_{G_i}\right]\\
& =\prod_{k=1}^p  \E\left[  \E\left[ \left. \exp\left(\lambda \frac{y_k\zeta_{ik}U_{ik}}{\|y\|_2\sqrt{q}}\right) \1_{G_i}  \right| \zeta_{i,1},\dots,\zeta_{i,k}\right]\right]\\
& \leq \prod_{k=1}^p  \E\left[  \left. \exp\left( \frac{\lambda^2y_k^2\zeta_{ik}}{2\|y\|_2^2q}\right) \1_{G_i}  \right| \zeta_{i,1},\dots,\zeta_{i,k}\right]\\
& = \E\left[  \left. \exp\left( \frac{\lambda^2}{2}\sum_{k=1}^p \frac{y_k^2}{\|y\|_2^2q}\zeta_{ik}\right) \1_{G_i}  \right| \zeta_{i,1},\dots,\zeta_{i,k}\right]\\
&\leq \exp\left( \frac{\lambda^2}{2}\left(1+\frac{\epsilon}{3}\right)\right)
\end{align*}
so that $Z_i/\sqrt{1+\epsilon/3}$ is $1$-sub-Gaussian and by Equation~\eqref{eq:gaussianTrick}
\[\E\left[\exp\left(\frac{\ell Z_i^2}{1+\frac{\epsilon}{3}}\right) \1_{G_i}\right] \leq  \frac{1}{\sqrt{1-2\ell}}\;. \]
Left deviations may be treated similarly. Hence, on the event $G=\bigcap_{i=1}^d G_i$, the behaviour of $\|Y\|$ is just as in the non-sparse case: for all $\epsilon\leq 1$, by Equations~\eqref{eq:chernoffUB} and~\eqref{eq:chernoffLB}

\begin{align*} 
\P\left( G\cap \left\{\frac{\|Ay\|^2}{\|y\|^2} \notin \big[1-\epsilon, 1+\epsilon\big] \right\} \right)
&\leq \P\left( G \cap \left\{\frac{1}{d} \sum_{i=1}^d \frac{Z_i^2}{1+\frac{\epsilon}{3}} >  \frac{1+\epsilon}{1+\frac{\epsilon}{3}} \right\} \right) + \P\left(  G \cap \left\{\frac{1}{d} \sum_{i=1}^d \frac{Z_i^2}{1-\frac{\epsilon}{3}}  < \frac{1-\epsilon}{1-\frac{\epsilon}{3}} \right\} \right)\\
&\leq \P\left( G \cap \left\{\frac{1}{d} \sum_{i=1}^d \frac{Z_i^2}{1+\frac{\epsilon}{3}} > 1+\frac{\epsilon}{3} \right\} \right) + \P\left(  G \cap \left\{\frac{1}{d} \sum_{i=1}^d \frac{Z_i^2}{1-\frac{\epsilon}{3}}  < 1-\frac{\epsilon}{3} \right\} \right)\\
&\leq 2\,
e^{-d\left(\frac{\epsilon^2 - \epsilon^3}{36} \right)} \;.
\end{align*}
Consequently,
\begin{align*}
   & \P\left( \bigcup_{1\leq i<j\leq n} \left\{ \big\|A(x_i-x_j)\big\|^2 \notin \left[ (1-\epsilon)\|x_i-x_j\|^2  ,  (1+\epsilon)\|x_i-x_j\|^2  \right] \right\}  \right) 
\\ & \leq  P\big(\bar{G}\big) + \sum_{1\leq i<j\leq n }  \P\left( G\cap \left\{\frac{\|A(x_i-x_j)\|^2}{\|x_i-x_j\|^2} \notin \big[1-\epsilon, 1+\epsilon\big] \right\} \right)
<  \frac{1}{2}+ n^2 \;e^{-d\left(\frac{\epsilon^2 - \epsilon^3}{36} \right)}  \leq 1
\end{align*}
as soon as $q$ satisfies Eq.\eqref{eq:qmin} and $d\geq \frac{36\log\big(2n^2\big)}{\epsilon^2-\epsilon^3}$.

\subsection{Proof of Theorem \ref{th:jl_sparse}}
	
Let $\odot$ be the Hadamard product, so that $A = \tfrac{1}{\sqrt{dq}}\zeta \odot U$. We assume that $y$ is unit vector of $\R^p$ representing one of the unit vector $\tfrac{x_i - x_j}{\|x_i - x_j\|}$, and we write as before $Y = Ay$. The coefficients of $w_i = \tfrac{1}{\sqrt{dq}}\zeta_{i\cdot} \odot y$ are equal to $w_{ik} = \tfrac{1}{\sqrt{dq}}\zeta_{ik}y_k$, and
\begin{equation*}
    Y_i^2 = \tfrac{1}{dq}\sum_{k',k}\zeta_{ik}\zeta_{ik'}U_{ik}U_{ik'}y_ky_{k'} =  (U_{i\cdot}^T w_i)^2\; .
\end{equation*}

\medskip

\noindent
{\bf The upper bound}

\medskip

$Y_i/ \|w_i\|$ is $1$-sub-Gaussian conditionally to $\zeta$. Hence, if $G$ is a standard Gaussian random variable, it holds conditionally to $\zeta$ that for any $\ell$ in $[0, 1/(2\max\|w_i\|^2))$,
\[
	\E_U\left[e^{\ell Y_i^2}\right] = \E_G \left[\E_U\left[e^{\sqrt{2\ell} Y_iG}\right]\right]
	\leq \E_G \left[e^{\ell \|w_i\|^2G^2}\right]
	= \frac{1}{\sqrt{1-2\ell \|w_i\|^2}}
	\leq \exp\left(\ell  \|w_i\|^2 + \tfrac{\ell ^2 \|w_i\|^4}{1-2\ell\|w_i\|^2} \right) \; ,
\]
where the last inequality comes from the fact that $-\tfrac{1}{2}\log(1-2\ell \|w_i\|^2) - \ell \|w_i\|^2 = \int_0^{\ell}\tfrac{2s \|w_i\|^4}{1-2s \|w_i\|^2}ds \leq \tfrac{\ell^2\|w_i\|^4}{1-2\ell \|w_i\|^2}$. Hence, conditionally to $\zeta$, we have that
\begin{equation}\label{eq:right_tail_jl}
	\P_{\zeta}(\|Ay\|^2 \geq 1+ \epsilon) \leq \E_{\zeta}\left[\exp\left(\ell \sum_{i=1}^d \|w_i\|^2 + \frac{d\ell^2\max_i\|w_i\|^4}{1-2\ell \max_i\|w_i\|^2}-\ell(1+\epsilon)\right)\right] \; .
\end{equation}
Let us now integrate according to $\zeta$. The $w_{ik}$'s are independent, identically distributed random variables with law $\tfrac{y_k}{\sqrt{dq}}\mathcal B(q)$. Moreover, $\Var(w_{ik}^2) \leq \tfrac{y_k^4}{d^2q^2}\E[\zeta_{ik}^4] \leq \tfrac{1}{d^2q}y_k^4$. Bernstein's inequality together with a union bound over the $d$ possible indices $i = 1, \dots, d$ gives that with probability at least $1- \delta/(3n^2)$,
\begin{equation*}
	\max_i\|w_i\|^2 \leq \tfrac{1}{d} + \sqrt{2\tfrac{1}{d^2q}\|y\|_4^4\log(3n^2d/\delta)} + \tfrac{1}{dq}\|y\|_{\infty}^2\log(3n^2d/\delta) \; .
\end{equation*}
The assumption (\ref{eq:condition_q}) implies that $q \geq \|y\|_\infty^2$. Since $\|y\|^2=1$, $\|y\|_4^4 \leq \|y\|_\infty^2$ and the following event $G$ holds with probability at least $\delta/(3n^2)$:
\begin{equation}\label{eq:event_xi}
	G = \left\{ \max_i\|w_i\|^2 \leq\tfrac{1}{d}\left(1 + \sqrt{4\log(3nd/\delta)} + 2\log(3nd/\delta)\right) \right\} = \left\{ \forall i, \|w_i\|^2 \leq \frac{\Psi}{d}\right\}\;.
\end{equation}
where for simplicity we write $\Psi=1 + \sqrt{4\log(3nd/\delta)} + 2\log(3nd/\delta)$.
Using the inequality $e^u \leq 1 + u + (e-2)u^2$ for any $u \in [0,1]$, we have that for any $\ell \in [0, \tfrac{dq}{\|y\|^2_{\infty}})$,

\begin{align*}
	\E\left[\exp\left(\ell\sum_{i=1}^d\|w_i\|^2\right)\right] &= \prod_{i,k}\left( q \exp(\tfrac{\ell}{dq}y_k^2) + 1-q \right)\\
	&\leq \prod_{i,k} \exp\left(q (\exp(\tfrac{\ell}{dq}y_k^2)-1)\right)\\
	&\leq \exp(\ell + (e-2)\tfrac{\ell^2}{dq}\|y\|_4^4)\\
	&\leq \exp(\ell + (e-2)\tfrac{\ell^2}{d}) \; .
\end{align*}

Let us now integrate the conditional probability $\P_{\zeta}(\|Ay\|^2 \geq 1+ \epsilon)$ over $\zeta$. For any $\ell \in [0, d/(4\Psi))$, we have

\begin{align*}
	\P(\|Ay\|^2 \geq 1+ \epsilon) &\leq \E\left[\exp\left(\ell \sum_{i=1}^d \|w_i\|^2 + \frac{d\ell^2\max_i\|w_i\|^4}{1-2\ell \max_i\|w_i\|^2}-\ell(1+\epsilon)\right)\mathbf 1_{G}\right] + \frac{\delta}{3n^2}\\
	&\leq \E\left[\exp\left(\ell \sum_{i=1}^d \|w_i\|^2 + 2\ell^2\tfrac{A^2}{d}-\ell(1+\epsilon)\right)\right] + \frac{\delta}{3n^2}\\
	&\leq \exp\left((e-2)\tfrac{\ell^2}{d} + 2\ell^2\tfrac{\Psi^2}{d} - \ell \epsilon\right)+\frac{\delta}{3n^2} \\
\end{align*}

The second inequality comes from Equation~\ref{eq:right_tail_jl}, which holds true under the event $G$ defined in (\ref{eq:event_xi}). The third inequality comes from the fact that $\ell \leq d/(4\Psi^2)\leq d \leq dq/\|y\|^2_{\infty}$ and the above upper bound on $\E[\exp(\ell\sum_{i=1}^d\|w_i\|^2)]$.

Choosing $\ell = d\epsilon/(2(e-2) + 4\Psi^2) \leq d/(4\Psi^2)$, we get
\begin{align*}
	\P(\|Ay\|^2 \geq 1+ \epsilon) &\leq \exp\left(-\frac{d\epsilon^2}{2(e-2) + 4\Psi^2}\right)+\frac{\delta}{3n^2} \; .
\end{align*}

Hence, if $d \geq d_0 = 12\log(3n/\delta)\Psi^2/ \epsilon^2$, we obtain that 
\begin{equation*}
	\P(\|Ay\|^2 \geq 1+ \epsilon) \leq \exp\left(-2\log\frac{3n}{\delta}\right) + \frac{\delta}{3n^2} \leq  \frac{2\delta}{3n^2}\; .
\end{equation*}
A union bound all the $n(n-1)/2 \leq n^2$ pairs gives that 
\begin{equation}\label{eq:ub_sparse}
	\P\left( \bigcup_{1\leq i<j\leq n} \left\{ \big\|A(x_i-x_j)\big\|^2 \geq (1+ \epsilon)\|x_i-x_j\|^2 \right\}  \right) \leq 2\delta/3\;.
\end{equation}

\medskip
\noindent
{\bf The lower bound}

\medskip

For the lower bound, we use the same arguments as in section \ref{subsec:LB}. We still have that $\E [(Ay)_i^2] = 1/d$ for any $i \in \{1, \dots, d\}$, but we since the variables $(Ay)_i$ are not sub-Gaussians, do not have the bound $\E[(Ay)_i^4] \leq 3$. Instead, we bound the fourth moment as follows:

\begin{align*}
	\E[(Ay)_i^4] &\leq \frac{3}{d^2q^2} \sum_{k\neq k'} q^2 y_k^2y_{k'}^2 + \frac{1}{d^2q^2} \sum_{k=1}^d qy_k^4 \E[U_{ik}^4]\\
	&\leq \frac{3}{d^2} + \frac{3\|y\|_{\infty}^2}{d^2q} \leq \frac{6}{d^2} \; .
\end{align*}
Hence,
\[\P\left(\|Ay\|^2\leq 1-\epsilon\right) \leq  \exp \left(d \ln\left(1 - \frac{\ell}{d} + 3\frac{\ell^2}{d^2}\right) + \ell ( 1-\epsilon)\right) \leq \exp \left( 3 \frac{\ell^2}{d} - \ell \epsilon\right)
\;.\] 

Choosing $\ell = \epsilon/6$, we obtain that
\begin{equation*}
	\P(\|Ay\|^2 \leq 1- \epsilon) \leq \exp(-\frac{d\epsilon^2}{12}) \; .
\end{equation*}

If $d \geq d_0 \geq 24 \log(3n/\delta)$, then we obtain 
\begin{equation*}
	\P(\|Ay\|^2 \leq 1- \epsilon) \leq \delta/(3n^2) \; .
\end{equation*}
Hence, from a union bound over the at most $n^2$ possible pairs $x_i, x_j$, we obtain that 
\begin{equation}\label{eq:lb_sparse}
	\P\left( \bigcup_{1\leq i<j\leq n} \left\{ \big\|A(x_i-x_j)\big\|^2 \leq (1- \epsilon)\|x_i-x_j\|^2 \right\}  \right) \leq \delta/3\;.
\end{equation}

We conclude from the upper bound (\ref{eq:ub_sparse}) and the lower bound (\ref{eq:lb_sparse}) that if $q \geq \max_{i\neq j}\frac{\|x_i - x_j\|^2_{\infty}}{\|x_i - x_j\|_2^2}$ and if $d \geq d_0(n,\delta, \epsilon)$, the $\epsilon$-quasi isometry property (\ref{eq:JLprop}) holds with probability at least $1-\delta$, that is
\[\P\left( \bigcup_{1\leq i<j\leq n} \left\{ \big\|A(x_i-x_j)\big\|^2 \notin \left[ (1-\epsilon)\|x_i-x_j\|^2  ,  (1+\epsilon)\|x_i-x_j\|^2  \right] \right\}  \right) \leq 2\delta/3 + \delta/3 \leq \delta\;.\]

\subsection{Proof of Theorem \ref{th:optimality}}
	Let $y$ be a unit vector of $\R^p$. 
	If $dqs \leq 1/2048$, then we have that 
	\begin{align*}
		\P(\|Ay\|^2 \not \in [1-\epsilon, 1+\epsilon]) \geq \P(Ay = 0) \geq e^{-ds\tfrac{q}{1-q}} \geq e^{-5000} \; ,
	\end{align*}
	which proves the result in that case.

	In what follows, we assume that $dqs \geq 1/2048$. 
	Chebychev's inequality implies that
	\begin{align*}
		\P(\|Ay\|^2 \in [1-\epsilon, 1+\epsilon])&= \P((\|Ay\|^2 - 1)^2 \leq \epsilon^2) \\
		&\leq \frac{\Var\left[ (\|Ay\|^2 - 1)^2\right]}{\E\left[(\|Ay\|^2 - 1)^2\right] - \epsilon^2} \; .
	\end{align*}
Subsequently, we give a lower bound of $\E[(\|Ay\|^2 - 1)^2]$ and an upper bound of $\Var \big[(\|Ay\|^2 - 1)^2\big]$. We denote by $X$ a random variable following the distribution of one coefficients of $A$. $X$ can be written $\tfrac{1}{\sqrt{dq}}\zeta\, U$, where $\zeta \sim \mathrm{Bern}(q)$ an $U\sim \mathcal{U}\big(\{-1,1\}\big)$ are independent. It holds in particular that for any $k\geq 1$, $\E[X^{2k+1}] = 0$ and $\E[X^{2k}] = \frac{1}{d^kq^{k-1}}$.

\medskip
\noindent
{\bf Lower bound of $\E[(\|Ay\|^2 - 1)^2]$}.

\begin{align*}
	\E\left[(\|Ay\|^2 - 1)^2\right] &= \E\left[\|Ay\|^4\right] - 1 =\E\left[\left(\sum_{i= 1}^d \sum_{k=1}^p \sum_{l=1}^p A_{ik}A_{il}y_ky_l \right)^2\right] - 1 \\
	&=\sum_{i_1,i_2,k_1,k_2,l_1,l_2} \E\left[\prod_{u \in \{1, 2\}} A_{{i_u}{k_u}}A_{{i_u}{l_u}}y_{k_u}y_{l_u}\right]  - 1 \; ,
\end{align*}
where the final sum is over all $(i_1, i_2) \in [d]^2$ and all $(k_1,k_2,l_1,l_2) \in [p]^4$.
Let us fix $i_1, i_2$ such that $i_1 = i_2$. Since $\E[A_{ik}] = \E[A_{ik}^3] = 0$ for any $i,k$, either $k_1=k_2=l_1=l_2$ or there is exactly two pairs of equal indices among $(k_1, k_2, l_1, l_2)$. Since there are exactly $3$ possible ways of matching $2$ pairs among the four indices, we have that
\begin{align*}
	\sum_{k_1,k_2,l_1,l_2} \E\left[\prod_{u \in \{1, 2\}} A_{{i_u}{k_u}}A_{{i_u}{l_u}}y_{k_u}y_{l_u}\right]  - 1 = \E[X^4]\|y\|_4^4 + 3\E[X^2]^2\left(\|y\|_2^4-\|y\|_4^4\right) = \frac{1}{d^2qs} + \frac{3}{d^2}\left(1-\frac{1}{s}\right) \; .
\end{align*}
If $i_1 \neq i_2$, then we necessarily have that $k_1=l_1$ and $k_2=l_2$ for nonzero contributions.
Hence, in that case,
\begin{align*}
	\sum_{k_1,k_2,l_1,l_2} \E\left[\prod_{u \in \{1, 2\}} A_{{i_u}{k_u}}A_{{i_u}{l_u}}y_{k_u}y_{l_u}\right]  - 1 = \E[X^2]^2\|y\|^4_2 = \frac{1}{d^2} \; .
\end{align*}
Combining the two cases, we obtain that 
\begin{equation}\label{eq:lb_moment_4}
	\E\left[(\|Ay\|^2 - 1)^2\right] = \frac{d}{d^2qs} + \frac{3d}{d^2}\left(1-\frac{1}{s}\right) + \frac{d(d-1)}{d^2} - 1 \geq \frac{1}{dqs} \; .
\end{equation}

\medskip
\noindent
{\bf Upper bound of $\Var\big[\|Ay\|^2 - 1)^2\big]$.}

\begin{align*}
	\Var\left[ (\|Ay\|^2 - 1)^2\right] \leq \E\left[ (\|Ay\|^2 - 1)^4 \right] &= \E\big[\|Ay\|^8\big] - 4\E\big[\|Ay\|^6\big] + 6\E\big[\|Ay\|^4\big] - 4\E\big[\|Ay\|^2\big] + 1\\
	&\leq \E\big[\|Ay\|^8\big] - 4\E\big[\|Ay\|^6\big] + \frac{6}{dqs} + 3 + \frac{18}{d}\; .
\end{align*}
The inequality comes from the above computation of $E[\|Ay\|^4]$. In what follows, we first upper-bound $\E[\|Ay\|^8]$ and then we lower-bound $\E\big[\|Ay\|^6\big]$. For the latter, the idea is to cancel out the terms of constant order or of order $1/(dqs)$. Following the same lines as in the computation of $\E[\|Ay\|^4]$, we observe that 
\begin{equation}\label{eq:computation_moment_8}
	\E\big[\|Ay\|^8\big] = \sum_{(i_u), (k_u), (l_u)} \E\left[\prod_{u \in \{1, 2, 3, 4\}} A_{{i_u}{k_u}}A_{{i_u}{l_u}}y_{k_u}y_{l_u}\right] \;,
\end{equation}
where the sum is over all $((i_u)_{u=1, \dots, 4}, (k_u)_{u=1, \dots, 4},(l_u)_{u=1, \dots, 4}) \in [d]^4 \times [p]^8$. Let us consider the following sets for the indices $(i_u)$:
\begin{enumerate}
	\item $(i_u) \in I_1$ if the $i_u$'s are pairwise distinct. in that case, $|I_1| = d(d-1)(d-2)(d-3) \leq d^4$
	\item $(i_u) \in I_2$ if there are exactly two equal indices among the $i_u$'s. In other words, $(i_u)$ is a permutation of $(i,i,i',i'')$ where $i$, $i'$, $i''$ are pairwise distinct. Here, $|I_2| = 6d(d-1)(d-2) \leq 6d^3$
	\item $(i_u) \in I_3$ if there are exactly three equal indices among the $i_u$'s, i.e $(i_u)$ is a permutation of $(i,i,i,i')$ where $i \neq i'$. Here, $|I_3| = 4d(d-1) \leq 4d^2$
	\item $(i_u) \in I_4$ if all the $i_u$'s are equal. Here, $|I_4| = d$
	\item $(i_u) \in I_5$ if there are exactly two pairs of equal indices among the $i_u$'s. Here, $|I_5| = 3d(d-1) \leq 3d^2$
\end{enumerate}
The sets $(I_v)$ are disjoint, and the reader can check that the sum of their sizes is equal to $d^4$. Let us fix $(i_u) \in [d]^4$, and consider the five following cases, each corresponding to one of the sets $(I_v)$.
\begin{enumerate}
	\item If $(i_u) \in I_1$, then the expectation of the product over $u \in \{1,2,3,4\}$ is nonzero only if $k_u = l_u$ for all $u \in \{1,2,3,4\}$. Hence,
	\begin{equation*}
		\sum_{(k_u), (l_u)} \E\left[\prod_{u \in \{1, 2, 3, 4\}} A_{{i_u}{k_u}}A_{{i_u}{l_u}}y_{k_u}y_{l_u}\right] = \frac{1}{d^4} \|y\|^8_2 = \frac{1}{d^4} \; .
	\end{equation*}
	\item If $(i_u) \in I_2$, we assume that without loss of generality that $(i_1,i_2,i_3)$ are pairwise distinct and that $i_3 = i_4$. In that case, we have a nonzero contribution only if $k_1= l_1$, $k_2 = l_2$ and if either $(k_3=l_3=k_4=l_4)$ or there are two matching pairs among the indices $(k_3,l_3,k_4,l_4)$ (3 possible matching). Hence, using the fact that $\|y\|_2 = 1$ and $\|y\|_4^4=1/s$:
	\begin{equation*}
		\sum_{(k_u), (l_u)} \E\left[\prod_{u \in \{1, 2, 3, 4\}} A_{{i_u}{k_u}}A_{{i_u}{l_u}}y_{k_u}y_{l_u}\right] \leq \frac{1}{d^4q}\|y\|_2^4\|y\|_4^4 + \frac{3}{d^4} \|y\|^8_2 = \frac{1}{d^4qs} + \frac{3}{d^4}\; .
	\end{equation*}
	\item If $(i_u) \in I_3$, we assume that $i_1, i_2$ are distinct and that $i_2=i_3=i_4$. In that case, we have a nonzero contribution if $k_1 = l_1$ and if either $(k_2=l_2=k_3=l_3=k_4=l_4)$ or if there are $3$ matching pairs among $(k_1, l_2, k_3, l_3, k_4, l_4)$ ($5\cdot 3=15$ possible matchings). Hence, using also that $qs \leq 1$,
	\begin{equation*}
		\sum_{(k_u), (l_u)} \E\left[\prod_{u \in \{1, 2, 3, 4\}} A_{{i_u}{k_u}}A_{{i_u}{l_u}}y_{k_u}y_{l_u}\right] \leq \frac{1}{d^4q^2}\|y\|_2^2\|y\|_6^6 + \frac{15}{d^4} \|y\|^8_2 \leq \frac{16}{d^4q^2s^2} \; .
	\end{equation*}

	\item If $(i_u) \in I_4$, then there is a nonzero contribution in one of the three following cases. Either the $k_u$'s and $l_u$'s are all equal, or there are $2$ groups among the $k_u$'s and $l_u$'s, each made of $4$ indices  that are all equal ($\tfrac{1}{2}\binom{8}{4} = 35$ possibilities), or there are $4$ matching pairs ($7\cdot5\cdot3 = 105$ possible matching). Hence,
	\begin{equation*}
		\sum_{(k_u), (l_u)} \E\left[\prod_{u \in \{1, 2, 3, 4\}} A_{{i_u}{k_u}}A_{{i_u}{l_u}}y_{k_u}y_{l_u}\right] \leq \frac{1}{d^4q^3}\|y\|_8^8 + \frac{35}{d^4q^2} \|y\|_4^8 + \frac{105}{d^4} \|y\|_2^8 \leq \frac{141}{d^4q^3s^3} \; .
	\end{equation*}

	\item If $(i_u) \in I_5$, assume without loss of generality that $i_1 = i_2$, $i_3=i_4$ and $i_2 \neq i_3$. Then there are two possibilities for each pairs $(i_1, i_2)$ and $(i_3,i_4)$. Either $k_1=l_1=k_2=l_2$ (resp. $k_3=l_3=k_4=l_4)$ or there are three pairs of equal indices among $k_1,l_1,k_2,l_2$ (resp. $k_3,l_3,k_4,l_4$). This gives
	\begin{equation*}
		\sum_{(k_u), (l_u)} \E\left[\prod_{u \in \{1, 2, 3, 4\}} A_{{i_u}{k_u}}A_{{i_u}{l_u}}y_{k_u}y_{l_u}\right] \leq \left(\frac{1}{d^2q}\|y\|_4^4 + \frac{3}{d^2} \|y\|^4_2\right)^2 \leq \frac{16}{d^4q^2s^2} \; .
	\end{equation*}
	
\end{enumerate}
Decomposing the equation (\ref{eq:computation_moment_8}) into these five above cases and using the assumption $dqs \geq 1$, we obtain that 
\begin{equation*}
	\E\big[\|Ay\|^8\big] \leq 1+ \frac{6}{dqs} + \frac{18}{d} + \frac{4*16}{d^2q^2s^2} + \frac{141}{d^3q^3s^3} + \frac{3*16}{d^2q^2s^2} \leq 1 + \frac{6}{dqs} + \frac{18}{d} + \frac{253}{d^2q^2s^2} \; ,
\end{equation*}
which implies that
\begin{align*}%\label{eq:ub_variance_8}
	\Var\left[ (\|Ay\|^2 - 1)^2\right] &\leq \E[\|Ay\|^8] - 4\E[\|Ay\|^6] + \frac{6}{dqs} + 3 + \frac{12}{d} \\
	&\leq 4 + \frac{12}{dqs} - 4\E[\|Ay\|^6] + \frac{253}{d^2q^2s^2} + \frac{30}{d} \; .
\end{align*}
We now show that the term $4 + \tfrac{12}{dqs}$ is smaller than $4\E[\|Ay\|^6]$. Doing the same reasoning as above, we can write
\begin{equation}\label{eq:computation_moment_6}
	\E\big[\|Ay\|^6\big] = \sum_{(j_u), (k_u), (l_u)} \E\left[\prod_{u \in \{1, 2, 3\}} A_{{i_u}{k_u}}A_{{i_u}{l_u}}y_{k_u}y_{l_u}\right] \;,
\end{equation}
where the sum is over all $(j_u),(k_u),(l_u)$ in $[d]^3\times [p]^6$.
The product is always non-negative, and we consider the sets 
\begin{equation*}
	J_1 = \{(j,j,j):~ j \in [d]\} \text{ and } J_2 = \{(j_1, j_2,j_3):~ \text{two of the $j_u$ are equal and distinct from the other one}\} \; .
\end{equation*} 
We have that $|J_1| = d^3$ and $|J_2| = 3d(d-1)$, so that
\begin{align*}
	\E\left[\|Ay\|^6\right] &\geq \sum_{(j_u) \in J_1, (k_u), (l_u)} \E\left[\prod_{u \in \{1, 2, 3\}} A_{{i_u}{k_u}}A_{{i_u}{l_u}}y_{k_u}y_{l_u}\right] + \sum_{(j_u) \in J_2, (k_u), (l_u)} \E\left[\prod_{u \in \{1, 2, 3\}} A_{{i_u}{k_u}}A_{{i_u}{l_u}}y_{k_u}y_{l_u}\right] \\
	&= \|y\|_2^6 + 3d(d-1)\frac{1}{d^3q}\|y\|_4^4\|y\|_2^2 \\
	&= 1 +  \frac{3}{dqs} - \frac{3}{d^2qs} \geq 1 +  \frac{3}{dqs} - \frac{3}{d^2q^2s^2}
\end{align*}
To conclude, we obtain 
\begin{align*}
	\Var\left[ (\|Ay\|^2 - 1)^2\right] &\leq 4 + \frac{12}{dqs} - 4\E[\|Ay\|^6] + \frac{253}{d^2q^2s^2} + \frac{30}{d} \\
	&\leq \frac{256}{d^2q^2s^2} + \frac{30}{d} \; .
\end{align*}
Combining this latter upper bound with (\ref{eq:lb_moment_4}), we conclude that

\begin{align*}
	\P\big(\|Ay\|^2 \in [1-\epsilon, 1+\epsilon]\big) \leq \frac{\frac{256}{d^2q^2s^2} + \frac{30}{d}}{\frac{1}{dqs} - \epsilon^2} \leq  \frac{\frac{256}{dqs} + 30qs}{1- dqs\epsilon^2} \leq 1/2\; ,
\end{align*}
where we used in the last inequality the assumption that $dqs\epsilon^2 \leq 1/2$, $qs \leq 1/240$ and $dqs \geq 2048$. This concludes the proof of Theorem \ref{th:optimality}.

\medskip

\noindent
{\bf Acknowledgment.}
The authors are thankful to Pierre Bellec, to the editor Nicolas Verzelen and to the anonymous associate editor who helped improving the redaction of this paper.

%%-----------------------------
%%      your bibliography
%%-----------------------------
\bibliographystyle{plain}
\bibliography{biblio}

\end{document}